\let\cite=\citet
\begin{document}

\newcommand\footnotemarkfromtitle[1]{%
\renewcommand{\thefootnote}{\fnsymbol{footnote}}%
\renewcommand{\thefootnote}{\arabic{footnote}}}

\title{First-order greedy invariant-domain preserving approximation for hyperbolic problems: scalar conservation laws, and p-system}

\author{Jean-Luc Guermond\footnotemark[1] %
  \and Matthias Maier\footnotemark[1] %
  \and Bojan Popov\footnotemark[1]~\footnotemark[4] %
  \and Laura Saavedra\footnotemark[2] %
  \and Ignacio Tomas\footnotemark[3]}

\maketitle

\renewcommand{\thefootnote}{\fnsymbol{footnote}}
\footnotetext[1]{%
  Department of Mathematics, Texas A\&M University, 3368 TAMU, College
  Station, TX 77843, USA.}
\footnotetext[2]{%
  Departamento de Matem\'atica Aplicada a la Ingenier\'ia Aeroespacial,
  E.T.S.I. Aeron\'autica y del Espacio, Universidad Polit\'ecnica de Madrid,
  28040 Madrid, Spain}
\footnotetext[3]{%
  Department of Mathematics and Statistics, Texas Tech University, 2500
  Broadway Lubbock, TX 79409, USA.}
\footnotetext[4]{%
  Department of Mathematics and Informatics, University of Sofia, 5 James Boucher Blvd., 1164 Sofia, Bulgaria}

\renewcommand{\thefootnote}{\arabic{footnote}}

\begin{abstract}
  The paper focuses on first-order invariant-domain preserving
  approximations of hyperbolic systems. We propose a new way to estimate
  the artificial viscosity that has to be added to make explicit,
  conservative, consistent numerical methods invariant-domain preserving
  and entropy inequality compliant. Instead of computing an upper bound on
  the maximum wave speed in Riemann problems, we estimate a minimum wave
  speed in the said Riemann problems such that the approximation satisfies
  predefined invariant-domain properties and predefined entropy
  inequalities. This technique eliminates non-essential fast waves from the
  construction of the artificial viscosity, while preserving pre-assigned
  invariant-domain properties and entropy inequalities.
\end{abstract}

\begin{keywords}
 Conservation equations, hyperbolic systems, invariant domains, convex
  limiting, finite element method.
\end{keywords}

\begin{AMS}
  35L65, 65M60, 65M12, 65N30
\end{AMS}

\pagestyle{myheadings}
\thispagestyle{plain}
\markboth{J.-L.\,Guermond, M.\,Maier, B.\,Popov, L.\,Saavedra, I.\,Tomas}%
  {Greedy invariant domain preserving approximation for hyperbolic systems}

  
\section{Introduction}
\label{Sec:introduction}

Let us consider the hyperbolic system of conservation equations
$\partial_t\bu+\DIV\polf(\bu)=\bzero$, where $\bu$ denotes a conserved
state taking values in $\Real^m$ and $\polf(\bu)$ an associated flux
taking values in $\Real^{m\CROSS d}$, where $d$ is the space
dimension. Most explicit approximation methods for solving this type
of system are based on some notion of numerical flux and involve some
numerical dissipation. For instance, all the first-order methods based on Lax's
seminal paper~\citep[p.~163]{Lax_1954} involve numerical fluxes
between pairs of degrees of freedom, say $i,j$, that take the
following form
$\frac12 (\polf(\bsfU_i)+\polf(\bsfU_j))\bn_{ij} + \alpha_{ij}
(\bsfU_i-\bsfU_j)$, where $\bn_{ij}$ is some unit vector associated
with the space discretization at hand and $\alpha_{ij}$ is an upper
bound on the maximum wave speed in the Riemann problem using the flux
$\polf(\bsfU)\bn_{ij}$ and the states $\bsfU_i$ and $\bsfU_j$ as left
and right Riemann data. Denoting by
$\lambda_{\max}(\bn_{ij},\bsfU_i,\bsfU_j)$ the maximum wave speed in
the Riemann problem in question, it is now well established that
choosing $\alpha_{ij}$ such that $\alpha_{ij} \ge
\lambda_{\max}(\bn_{ij},\bsfU_i,\bsfU_j)$ guarantees that some invariant-domain
property can be extracted from the scheme; see \eg
\cite{Harten_Lax_VanLeer_1983}, \cite[p.~375]{Tadmor_1984},
\cite[\S5]{Perthame_Shu_1996}. Using $\lambda_{\max}(\bn_{ij},\bsfU_i,\bsfU_j)$
to construct invariant-domain preserving schemes dates back to the origins of
computational fluid dynamics; we refer the reader for instance
to \citep[p.~163]{Lax_1954}. Recalling that the
flux $\alpha_{ij}(\bsfU_i-\bsfU_j)$ is associated with numerical dissipation
and therefore induces a loss of accuracy, a natural question to ask is whether
it is possible to estimate a greedy value for $\alpha_{ij}$ in the open interval
$(0, \lambda_{\max}(\bn_{ij},\bsfU_i,\bsfU_j))$ guaranteeing that the
scheme satisfies the desired invariant-domain properties and relevant
entropy inequalities. It is the purpose of the present paper to
give a positive answer to this question. The paper is the result of a research
project that was initiated at the 9th International Conference on Numerical
Methods for Multi-Material Fluid Flow, held in Trento, Italy, 9-13, September
2019. Some of the questions posed above and some answers thereto were outlined
in \citep{OSTIslides}.

To convince the reader that the program described above is feasible,
let us consider the compressible Euler equations equipped with a
$\gamma$-law, and let us consider the Riemann problem with the flux
$\polf(\bu)\SCAL \bn$ and some left and right data
$\bu_L, \bu_R$. Then denoting by $\lambda_1^-, \lambda_1^+$ the
two wave speeds enclosing the $1$-wave, $\lambda_2$ the speed of the
$2$-wave (\ie the contact discontinuity), and
$\lambda_3^-, \lambda_3^+$ the two wave speeds enclosing the $3$-wave,
we have
$\lambda_1^-\le \lambda_1^+\le \lambda_2\le \lambda_3^-\le
\lambda_3^+$, and the maximum wave speed in the Riemann problem is
$\lambda_{\max}(\bn,\bu_L,\bu_R):=\max(|\lambda_1^-|,|\lambda_3^+|)>|\lambda_2|$.
If the Riemann data yields a solution that consists of just one contact
discontinuity, one can establish that the amount of viscosity that is
sufficient to satisfy all the invariant domain properties (in addition
to local entropy inequalities) is just $|\lambda_2|$ (because the
velocity and the pressure are constant in this case). Hence setting
the graph viscosity wave speed $\alpha$ to be larger than or equal to
$\lambda_{\max}(\bn,\bu_L,\bu_R)$ is needlessly over-diffusive
since taking $\alpha=|\lambda_2|$ is sufficient in this case. Invoking
the continuous dependence of the Riemann solution with respect to the
data, one then realizes that a similar conclusion holds if the Riemann
data is a small perturbation of a data set producing a contact
discontinuity only. The situation described above is well illustrated
by the multi-material Euler equations in Lagrangian coordinates. In
this case the interface between two materials is a contact
discontinuity that should keep its integrity over time. Let
$v:=\bu\SCAL \bn$ denote the component of the material velocity normal
to the interface. The maximum wave speed in the Riemann problem using
the two states on either sides of the interface gives
$\lambda_{\max}(\bn,\bu_L,\bu_R)=\max(|\lambda_1^--v|,|\lambda_3^+-v|)$
in the Lagrangian reference frame, whereas the wave speed of the
$2$-wave is $\lambda_2-v =0$. In this case the amount of viscosity
that is sufficient to satisfy all the invariant domain properties is
$\alpha=\lambda_2-v =0$. Hence, if one instead uses
$\alpha=\lambda_{\max}:=\max(|\lambda_1^--v|,|\lambda_3^+-v|)$ (as
suggested \eg in \cite{Guermond_Popov_Saavedra_Yong_2017} and most of
the literature on the topic) one needlessly diffuses the contact
discontinuity. The purpose of the present paper is to clarify the
issues described above and derive a variation of the method presented in
\citep{Guermond_Popov_SINUM_2016,Guermond_Popov_Saavedra_Yong_2017}
that is invariant-domain preserving, satisfies discrete entropy
inequalities, and minimizes the amount of artificial viscosity used.

The first-order method presented in the paper can be made
high-order and still be invariant-domain preserving by using one of many
techniques developed to this effect and available in the abundant
literature dedicated to the topic. This can be done by adapting the flux
transport corrected methodology from \cite[\S{II}]{Zalesak_1979}. For
instance, one can use methods inspired from \cite{TurekKuzmin2002,
KuzminLoehnerTurek2012} when the functionals to limit are affine. When
these functionals are nonlinear, one can use methods from
\cite{Zhang_Shu_2010, Zhang_Shu_2011} (for discontinuous finite elements)
or from \citep{Guermond_Nazarov_Popov_Tomas_SISC_2019,
Guermond_Popov_Tomas_CMAME_2019} (for continuous finite elements). A
complete list of all the excellent methods capable of achieving this goal
cannot be cited here.

The paper is organized as follows. We formulate the problem and recall
important concepts that are used in the paper in
\S\ref{Sec:Formulation_of_the_problem}. We introduce  in
\S\ref{Sec:General_strategy} the concept of
greedy viscosity for any hyperbolic system.
The key results of this section are the
definitions \eqref{def_of_greedy_viscosity} and \eqref{def_dij_greedy}
and Theorem~\ref{Thm:UL_is_invariant_greedy}. The concept of greedy
viscosity is then illustrated for scalar conservation equations in
\S\ref{Sec:Scalar_equations}. The main result summarizing the content
of this section are the definitions \eqref{def_lambda_12},
\eqref{Lambda_Scalar_Entrop_Ineq} and
Theorem~\ref{Thm:UL_is_invariant_modified}. The concept is further
illustrated for the $p$-system in \S\ref{Sec:p_system}.
The ideas
introduced in the paper are numerically illustrated in
\S\ref{Sec:Numerical_illustrations} for scalar conservation equations
and for the $p$-system. Some of these tests are meant to illustrate
that estimating a greedy wave speed in order to preserve the
invariant-domain is not sufficient to converge to an entropy
solution. Ensuring that entropy inequalities are satisfied is
essential for this matter. We also show that using just one entropy is
not sufficient for scalar conservation equations with a
non-convex flux. Due to lack of space, the concept of greedy viscosity
for systems like the compressible Euler equations equipped with a tabulated
equation of state will be illustrated in a forthcoming second part of this
work. A short outline of the performance of the method is given in the
conclusions section, see \S\ref{Sec:conclusions}.


\section{Formulation of the problem}
\label{Sec:Formulation_of_the_problem}
In this section we formulate the question that is addressed in the paper and
put it in context.

\subsection{The hyperbolic system}
\label{Sec:hyperbolic_system}
Our objective is to develop elementary and robust numerical tools to
approximate hyperbolic systems in conservation form:
\begin{equation}
  \label{def:hyperbolic_system}
  \begin{cases} \partial_t \bu + \DIV \polf(\bu)=\bzero,
    \quad \mbox{for}\, (\bx,t)\in \Dom\CROSS\Real_+,\\
    \bu(\bx,0) = \bu_0(\bx), \quad \mbox{for}\, \bx\in \Real^d.
  \end{cases}
\end{equation}
Here $d$ is the space dimension, $\Dom$ is a compact, connected, polygonal
subset of $\Real^d$. To avoid difficulties related to boundary conditions,
we either solve the Cauchy problem or assume that the boundary conditions
are periodic. The dependent variable (or state variable) $\bu$ takes values
in $\Real^m$. The function $\polf:\calA\to (\Real^m)^d$ is called flux.
The domain of $\polf$, \ie $\calA\subset\Real^{m}$, is called
\emph{admissible set}. The state variable $\bu$ is viewed as a column
vector $\bu=(u_1,\ldots,u_m)\tr$. The flux is a $m\CROSS d$ matrix with
entries $\polf_{ik}(\bu(\bx))$, $i\in\intset{1}{m}$, $k\in\intset{1}{d}$
and $\DIV\polf(\bu(\bx))$ is a column vector with entries
$(\DIV\polf(\bu))_i= \sum_{k\in\intset{1}{d}}\partial_{x_k}
\polf_{ik}(\bu(\bx))$. For any $\bn=(n_1\ldots,n_d)\tr\in \Real^d$, we
denote $\polf(\bu)\bn$ the column vector with entries
$\sum_{l\in\intset{1}{d}} \polf_{il}(\bu) n_l$, where $i\in\intset{1}{m}$.

We assume in the entire paper that the admissible set $\calA
\subset\Real^{m}$ is constructed such that for every pair of states
$(\bu_L,\bu_R)\in \calA\CROSS \calA$ and every unit vector $\bn$ in
$\Real^d$, the following one-dimensional Riemann problem
\begin{equation}
  \label{def:Riemann_problem}
  \partial_t \bw + \partial_x (\polf(\bw)\bn)=0,
  \quad (x,t)\in \Real\CROSS\Real_+,\qquad
  \bw(x,0) =
  \begin{cases}
    \bu_L, & \text{if $x<0$} \\
    \bu_R, & \text{if $x>0$},
  \end{cases}
\end{equation}
has a unique solution satisfying adequate entropy inequalities. We
assume that this solution is self-similar with self-similarity
parameter $\xi \eqq \frac{x}{t}$, and we set
\begin{equation}
  \bv(\bn,\bu_L,\bu_R,\frac{x}{t})\eqq \bw(x,t);
\end{equation}
see for instance \cite{Lax_1957_II,Toro_2009}. Using Lax's notation, we
denote by $\lambda_1^-\le \lambda_1^+$ the two wave speeds enclosing the
$1$-wave (\ie the leftmost wave) and $\lambda_m^-\le \lambda_m^+$ the two
wave speeds enclosing the $m$-wave (\ie the rightmost wave). The key result
that we are going to use in the paper is that $\bv(\bn,\bu_L,\bu_R, \xi) =
\bu_L$ if $\xi\le\lambda_1^-$ and $\bv(\bn,\bu_L,\bu_R, \xi) = \bu_R$ if
$\xi\ge \lambda_m^+$. We define a \emph{left wave speed}
$\lambda_{L}(\bn,\bu_L,\bu_R)\eqq \lambda_1^-$ and a \emph{right wave speed}
$\lambda_{R}(\bn,\bu_L,\bu_R)\eqq\lambda_m^+$. We also define
\emph{the maximum wave speed} of the Riemann problem to be
\begin{align}
  \label{eq:lambda_max}
  \lambda_{\max}(\bn,\bu_L,\bu_R) \eqq
  \max(|\lambda_{L}(\bn,\bu_L,\bu_R)|,|\lambda_{R}(\bn,\bu_L,\bu_R)|).
\end{align}
We will replace the notation $\lambda_{\max}(\bn,\bu_L,\bu_R)$ by
$\lambda_{\max}$ when the context is unambiguous. For further reference,
for every $t>0$ we define
\begin{equation}
  \label{Riemann_average}
  \overline\bv(t,\bn,\bu_L,\bu_R) :=\int_{-\frac12}^{\frac12}
  \bv(\bn,\bu_L,\bu_R,\tfrac{x}{t}) \diff x.
\end{equation}
Notice that if $t\lambda_{\max}(\bn,\bu_L,\bu_R) \le \frac12$, then
$\overline\bv(t,\bn,\bu_L,\bu_R)$ is the average of the entropy solution of
the Riemann problem \eqref{def:Riemann_problem} over the Riemann fan. This
property is illustrated in Figure~\ref{Fig:riemann_problem}.

\begin{figure}[t]
  \begin{center}
    \adjustbox{scale=0.35,trim =0 20 0 0}{\input{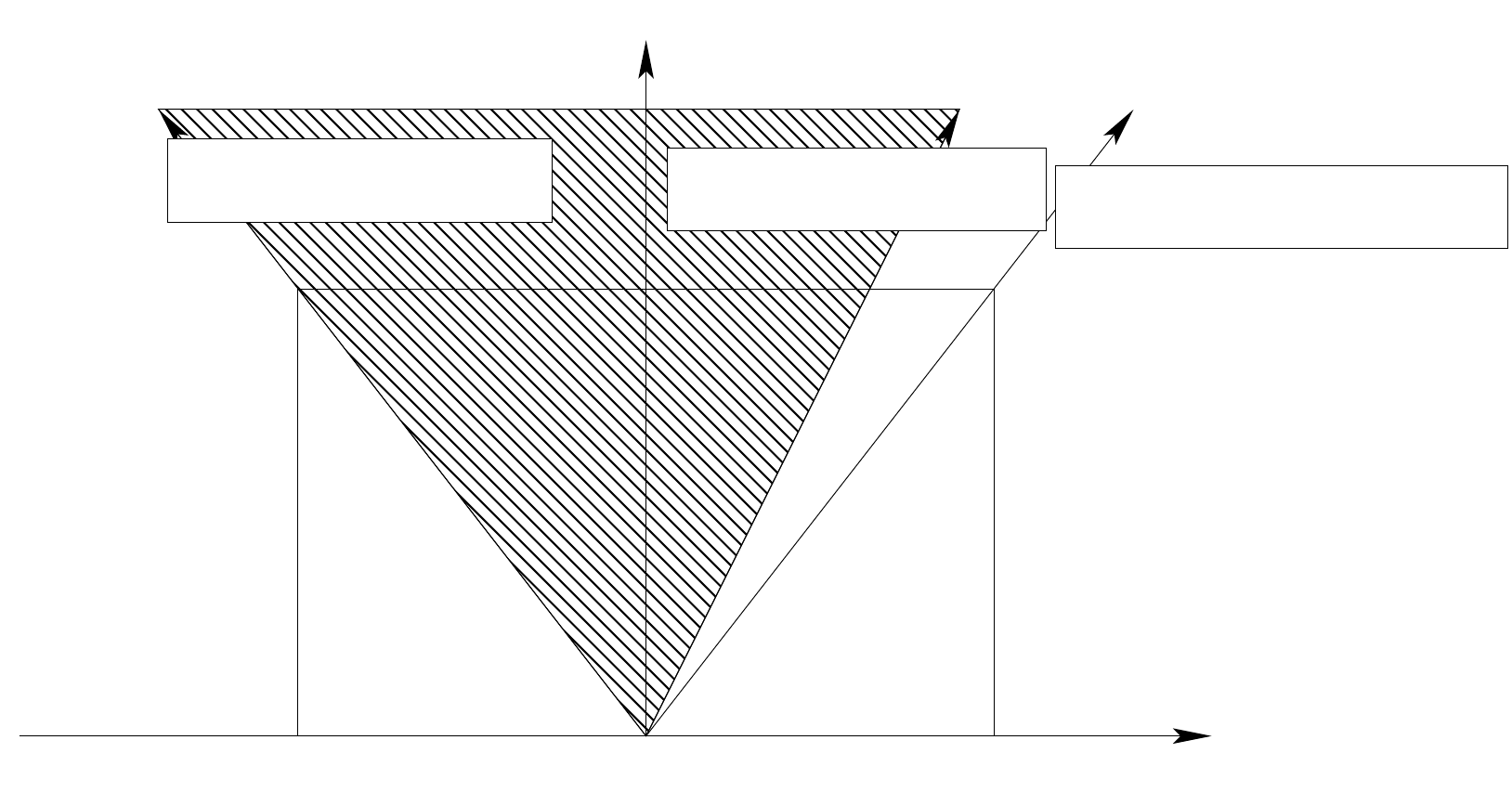_t}}
  \end{center}
  \caption{Riemann problem and Riemann fan.} \label{Fig:riemann_problem}
\end{figure}

\begin{definition}[Invariant domain]
  \label{Def:invariant_set}
  We say that $\calB\subset \calA\subset \Real^m$ is invariant domain
  for \eqref{def:hyperbolic_system} if the following holds true:
  \textup{(i)} $\calB$ is convex; \textup{(ii)} for any pair
  $(\bu_L,\bu_R)\in \calB\CROSS \calB$, any unit vector
  $\bn\in \Real^d$, and all
  $t\in\big(0,\frac{1}{2\lambda_{\max}(\bn,\bu_L,\bu_R)}\big)$, we
  have $\overline\bv(t,\bn,\bu_L,\bu_R)\in \calB$, where
  $\overline\bv$ is given by \eqref{Riemann_average}.
\end{definition}

\begin{lemma}[Invariance of the auxiliary states]\label{Lem:InvarBar}
  Let $\calB\subset \calA$ be any invariant domain
  for~\eqref{def:hyperbolic_system}. Let $(\eta,\bq)$ be an entropy
  pair for~\eqref{def:hyperbolic_system}. Let $\lambda>0$, let $\bn\in\Real^d$ be a
  unit vector. For all $\bu_L$, $\bu_R$ in $\calA$, consider the following
  auxiliary state:
  \begin{align}
    \label{def_generic_barstates}
    \overline{\bu}_{LR}(\lambda) :=
    \frac{1}{2}(\bu_L + \bu_R)
    -\frac{1}{2\lambda}\left(\polf(\bu_R) - \polf(\bu_L)\right)\bn.
  \end{align}
  Assume that $\bu_L,\bu_R\in \calB$, and
  $\lambda\ge \lambda_{\max}(\bn,\bu_L,\bu_R)$. Then
  \begin{align}
    &\overline{\bu}_{LR}(\lambda) =
    \overline\bv\left(\tfrac{1}{2\lambda},\bn,\bu_L,\bu_R\right),
    \label{eq1:Lem:InvarBar}
    \\
    &\overline{\bu}_{LR} (\lambda)\in \calB, \label{eq2:Lem:InvarBar}
    \\
    &\eta(\overline\bu _{LR} (\lambda)) \le
    \tfrac{1}{2}(\eta(\bu_L)+\eta(\bu_R)) -
    \tfrac{1}{2\lambda}\left(\bq(\bu_R) - \bq(\bu_L)\right)\SCAL\bn
    \label{eq3:Lem:InvarBar}
  \end{align}
\end{lemma}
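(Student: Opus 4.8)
The plan is to run the classical Harten--Lax--van~Leer consistency argument on a space--time control volume wide enough to contain the whole Riemann fan. Fix $t=\tfrac{1}{2\lambda}$. The hypothesis $\lambda\ge\lambda_{\max}(\bn,\bu_L,\bu_R)$ gives $t\lambda_{\max}\le\tfrac12$, hence for every $s\in(0,t]$ one has $\tfrac{1/2}{s}\ge\tfrac{1}{2t}=\lambda\ge\lambda_R=\lambda_m^+$ and $\tfrac{-1/2}{s}\le-\lambda\le\lambda_L=\lambda_1^-$. By the characterization of the self-similar solution recalled before~\eqref{eq:lambda_max}, this means that for all $s\in(0,t]$ the trace of $\bw$ on the right edge $\{x=\tfrac12\}$ equals $\bu_R$ and on the left edge $\{x=-\tfrac12\}$ equals $\bu_L$; moreover $\bw(\cdot,0)$ equals $\bu_L$ on $(-\tfrac12,0)$ and $\bu_R$ on $(0,\tfrac12)$. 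In other words, we are in exactly the regime where $\overline\bv(t,\bn,\bu_L,\bu_R)$ is the average of the entropy solution over its fan, as noted after~\eqref{Riemann_average}.

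For~\eqref{eq1:Lem:InvarBar} I would integrate the conservation law $\partial_t\bw+\partial_x(\polf(\bw)\bn)=\bzero$ over $(-\tfrac12,\tfrac12)\CROSS(0,t)$, which is legitimate since the self-similar entropy solution is of bounded variation and satisfies the balance law in the sense of distributions; using the initial and lateral data above, this yields $\int_{-1/2}^{1/2}\bw(x,t)\diff x=\tfrac12(\bu_L+\bu_R)-t\,(\polf(\bu_R)-\polf(\bu_L))\bn$, whose left-hand side is $\overline\bv(t,\bn,\bu_L,\bu_R)$ by~\eqref{Riemann_average}, and substituting $t=\tfrac{1}{2\lambda}$ gives precisely $\overline\bu_{LR}(\lambda)$. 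For~\eqref{eq2:Lem:InvarBar}, when $\lambda>\lambda_{\max}$ one has $\tfrac{1}{2\lambda}\in\big(0,\tfrac{1}{2\lambda_{\max}}\big)$, so Definition~\ref{Def:invariant_set}(ii) combined with~\eqref{eq1:Lem:InvarBar} gives $\overline\bu_{LR}(\lambda)\in\calB$; the borderline value $\lambda=\lambda_{\max}$ is recovered by letting $\lambda'\downarrow\lambda_{\max}$ and using continuity of $\lambda'\mapsto\overline\bu_{LR}(\lambda')$ together with closedness of $\calB$. For~\eqref{eq3:Lem:InvarBar} I would chain two estimates: convexity of $\eta$ and Jensen's inequality on the probability space $\big((-\tfrac12,\tfrac12),\diff x\big)$ give $\eta(\overline\bv(t,\bn,\bu_L,\bu_R))\le\int_{-1/2}^{1/2}\eta(\bv(\bn,\bu_L,\bu_R,\tfrac{x}{t}))\diff x$, and integrating the entropy inequality $\partial_t\eta(\bw)+\partial_x(\bq(\bw)\SCAL\bn)\le0$ over the same control volume with the same initial and lateral data gives $\int_{-1/2}^{1/2}\eta(\bv(\bn,\bu_L,\bu_R,\tfrac{x}{t}))\diff x\le\tfrac12(\eta(\bu_L)+\eta(\bu_R))-t\,(\bq(\bu_R)-\bq(\bu_L))\SCAL\bn$; combining these with~\eqref{eq1:Lem:InvarBar} and $t=\tfrac{1}{2\lambda}$ produces~\eqref{eq3:Lem:InvarBar}.

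The only genuinely delicate point is the justification of the two integrations over the space--time box when the Riemann solution contains shocks: one invokes that the entropy solution is BV and that the conservation law and the entropy inequality hold distributionally, so the one-dimensional divergence theorem applies and the traces on $\{x=\pm\tfrac12\}$ are honestly the constant states $\bu_L,\bu_R$ — which holds here because the wave-speed bound makes $\bw$ locally constant near those two edges for all $s\le t$. A secondary, minor point worth flagging is the endpoint case $\lambda=\lambda_{\max}$ in~\eqref{eq2:Lem:InvarBar}, which falls outside the open-interval condition in Definition~\ref{Def:invariant_set} and is handled by the limiting argument above. Everything else is bookkeeping: evaluating the resulting boundary integrals and substituting $t=\tfrac{1}{2\lambda}$.
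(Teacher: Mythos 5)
Your proof is correct and is essentially the argument the paper itself relies on: the paper gives no proof beyond citing Lemmas~2.1--2.2 of Guermond--Popov (2016), and those are exactly your Harten--Lax--van~Leer control-volume computation for \eqref{eq1:Lem:InvarBar}, Definition~\ref{Def:invariant_set} for \eqref{eq2:Lem:InvarBar}, and Jensen's inequality combined with the integrated entropy inequality for \eqref{eq3:Lem:InvarBar}. The only caveat is your treatment of the endpoint $\lambda=\lambda_{\max}(\bn,\bu_L,\bu_R)$, which invokes closedness of $\calB$ even though Definition~\ref{Def:invariant_set} only assumes convexity; this is a minor mismatch inherited from the open time interval in the definition rather than a defect of your approach.
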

\begin{proof}
  See \eg Lemma~2.1 and Lemma~2.2 in \citep{Guermond_Popov_SINUM_2016}.
\end{proof}

\subsection{Agnostic space approximation}
Without going into details, we now assume that we have at hand a fully
discrete scheme where time is approximated by using the forward Euler time
stepping and space is approximated by using some ``centered'' approximation
of \eqref{def:hyperbolic_system}, \ie without any artificial viscosity to
stabilize the approximation. We denote by $t^n$ the current time, $n\in
\polN$, and we denote by $\dt$ the current time step size; that is
$t^{n+1}:=t^n+\dt$ (we should write $\dt^n$ as the time step may vary at each time step,
but we omit the super-index $^n$ to simplify the notation). Let us assume that the current approximation is a
collection of states $\{\bsfU_i\upn\}_{i\in\calV}$, where the index set
$\calV$ is used to enumerate all the degrees of freedom of the
approximation. We assume that the ``centered'' update is given by
$\bsfU_i\upGnp$ with
\begin{equation}
  \label{Galerkin_approx}
  \frac{m_i}{\dt}(\bsfU_i\upGnp-\bsfU_i^n) +\!\!\sum_{j\in \calI(i)}
  \polf(\bsfU_j^n) \bc_{ij} = \bzero.
\end{equation}
The quantity $m_i$ is called lumped mass and we assume that $m_i>0$ for all
$i\in\calV$. The index set $\calI(i)$ is called local stencil. This set
collects only the degrees of freedom in $\calV$ that interact with
$i$.
We set $\calI(i)^*\eqq\calI(i){\setminus}\{i\}$. The
vector $\bc_{ij}\in \Real^d$ encodes the space discretization. We view
$\frac{1}{m_i}\sum_{j\in \calI(i)}\polf(\bsfU_j^n) \bc_{ij}$ as a Galerkin
(or centered or inviscid) approximation of $\DIV\polf(\bu)$ at time $t^n$
at some grid point (or cell) $i\in\calV$. The superscript $^\textup{G}$ is
meant to remind us that \eqref{Galerkin_approx} is a Galerkin (or inviscid
or centered) approximation of \eqref{def:hyperbolic_system}. That is, we
assume that the consistency error in space in \eqref{Galerkin_approx}
scales optimally with respect to the mesh size for the considered
approximation setting. We keep the discussion at this abstract level for
the sake of generality; see Remark~\ref{rem:literature}. The only
requirement that we make on the coefficients $\bc_{ij}$ is that the method
is conservative; that is to say, we assume that
\begin{align}\label{cijprop}
  \bc_{ij} = - \bc_{ji}\quad \text{and} \quad \sum_{j\in \calI(i)} \bc_{ij}
  = \bzero.
\end{align}
An immediate consequence of this assumption is that the total mass is
conserved: $\sum_{i\in\calV} m_i \bsfU_i\upGnp= \sum_{i\in\calV} m_i
\bsfU_i\upn$.

Of course, \eqref{Galerkin_approx} is in general not appropriate if the
solution to~\eqref{def:hyperbolic_system} is not smooth. To recover some
sort of stability (the exact notion of stability we have in mind is defined
in Theorem~\ref{Thm:UL_is_invariant}) we modify the scheme by adding a
graph viscosity based on the stencil $\calI(i)$; that is, we compute the
stabilized update $\bsfU_i^{n+1}$ by setting:
\begin{equation}
  \label{low_order_scheme}
  \frac{m_i}{\dt}(\bsfU_i^{n+1}-\bsfU_i^n) +\!\!\sum_{j\in \calI(i)}
  \polf(\bsfU_j^n) \bc_{ij} - \!\!\sum_{j\in \calI(i)^*}
  d_{ij}\upn (\bsfU^n_j - \bsfU^n_i)  = \bzero.
\end{equation}
Here $d_{ij}\upn$ is the yet to be defined graph viscosity. We assume that
\begin{align}\label{dijprop}
  d_{ij}\upn = d_{ji}\upn> 0, \quad \text{if} \quad i\ne j.
\end{align}
The symmetry implies that the method remains conservative. The question addressed in
the paper is the following: how large has $d_{ij}\upn$ to be chosen so
that \eqref{low_order_scheme} preserves invariant domains and satisfies
entropy inequalities (for some finite collection of entropies)?

\begin{remark}[Literature]
  \label{rem:literature} The algorithm \eqref{low_order_scheme} is a
  generalization of \citep[p.~163]{Lax_1954}; see also
  \cite{Harten_Lax_VanLeer_1983}, \cite[p.~375]{Tadmor_1984},
  \cite[\S5]{Perthame_Shu_1996} and the literature cited in these references.
  The reader is referred to \citep{Guermond_Popov_SINUM_2016},
  \citep{Guermond_Nazarov_Popov_Tomas_SISC_2019} for realizations of the above
  algorithm with continuous finite elements. Realizations of the scheme
  with finite volumes and discontinuous elements
  are described in \citep{Guermond_Popov_Tomas_CMAME_2019} and implemented in
  \cite{TomasMaierShad2023, TomasKronMaier2024}.
\end{remark}

\subsection{The  auxiliary bar states}
\label{Sec:aux_bar_states}
We now recall the main stability result established
in~\citep{Guermond_Popov_SINUM_2016}. The proof of this result is the
source of inspiration for the rest of the paper. For all $i\in\calV$ and
all $j\in\calI(i)$ we introduce the unit vector $\bn_{ij}:=
\bc_{ij}/\|\bc_{ij}\|_{\ell^2}$. Given two states $\bsfU_i^n$ and
$\bsfU_j^n$ in $\calA$, we recall that
$\lambda_{\max}(\bn_{ij},\bsfU_i^n,\bsfU_j^n)$ is the maximum wave speed in
the Riemann problem  defined in~\S\ref{Sec:hyperbolic_system} with left
state $\bsfU_i^n$, right state $\bsfU_j^n$, and unit vector $\bn_{ij}$. The
guaranteed maximum speed (GMS) graph viscosity $d_{ij}\upGMSn$ is defined
in \citep{Guermond_Popov_SINUM_2016} as follows:
\begin{equation}
  \label{Def_of_dij}
  d_{ij}\upGMSn := \max\big(\lambda_{\max}(\bn_{ij},\bsfU_i^n,\bsfU_j^n)
  \|\bc_{ij}\|_{\ell^2},
  \lambda_{\max}(\bn_{ji},\bsfU_j^n,\bsfU_i^n) \|\bc_{ji}\|_{\ell^2}\big).
\end{equation}

\begin{theorem}[Local invariance]
  \label{Thm:UL_is_invariant}
  Let $\calB\subset \calA$ be any invariant domain for
  \eqref{def:hyperbolic_system}. Let $(\eta,\bq)$ be any entropy pair
  for~\eqref{def:hyperbolic_system}. Let $n\ge 0$ and $i\in\calV$. Let
  $d_{ij}^n$ be any graph viscosity such that $d_{ij}^n\ge d_{ij}\upGMSn$
  and $d_{ij}^n>0$. Assume that $0<\dt\le m_i/\sum_{j\in\calI(i)^*}2d_{ij}^n$.
  Assume that $\{\bsfU_j^n\}_{j\in \calI(i)}\subset \calB$. Then the update
  $\{\bsfU_i\upnp\}_{i\in\calV}$ given by \eqref{low_order_scheme}
  satisfies the following properties:
  \begin{align}
    \label{eq1:Thm:UL_is_invariant}
    &\bsfU_i\upnp \in \calB,
    \\
    \label{eq2:Thm:UL_is_invariant}
    &\frac{m_i}{\dt} (\eta(\bsfU_i\upnp) - \eta(\bsfU_i^n))
    + \sum_{j\in\calI(i)} \bc_{ij}\SCAL \bq(\bsfU_j^n) -\!\!\!
    \sum_{j\in \calI(i)^*} \!\!\! d_{ij}^n
    (\eta(\bsfU_{j}^{n})  - \eta(\bsfU_{j}^{n})) \le 0.
  \end{align}
\end{theorem}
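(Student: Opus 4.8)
The plan is to write the stabilized update $\bsfU_i\upnp$ of \eqref{low_order_scheme} as a convex combination of $\bsfU_i^n$ and a family of auxiliary states covered by Lemma~\ref{Lem:InvarBar}, and then to invoke the convexity of $\calB$ for \eqref{eq1:Thm:UL_is_invariant} and the convexity of the entropy $\eta$ for \eqref{eq2:Thm:UL_is_invariant}.

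First I would use the conservation properties \eqref{cijprop} to rewrite the flux term in \eqref{low_order_scheme} as $\sum_{j\in\calI(i)}\polf(\bsfU_j^n)\bc_{ij}=\sum_{j\in\calI(i)^*}(\polf(\bsfU_j^n)-\polf(\bsfU_i^n))\bc_{ij}$, since $\polf(\bsfU_i^n)\sum_{j\in\calI(i)}\bc_{ij}=\bzero$ and the $j=i$ contribution vanishes. Writing $\bc_{ij}=\|\bc_{ij}\|_{\ell^2}\bn_{ij}$ and setting $\lambda_{ij}^n:=d_{ij}^n/\|\bc_{ij}\|_{\ell^2}$, the hypothesis $d_{ij}^n\ge d_{ij}\upGMSn$ together with \eqref{Def_of_dij} gives $\lambda_{ij}^n\ge\lambda_{\max}(\bn_{ij},\bsfU_i^n,\bsfU_j^n)$. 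For each $j\in\calI(i)^*$ I would then introduce the auxiliary state $\overline{\bsfU}_{ij}^n$ obtained from \eqref{def_generic_barstates} with $(\bu_L,\bu_R,\bn,\lambda)=(\bsfU_i^n,\bsfU_j^n,\bn_{ij},\lambda_{ij}^n)$, which upon using $\bc_{ij}=\|\bc_{ij}\|_{\ell^2}\bn_{ij}$ reads
\[
  \overline{\bsfU}_{ij}^n=\tfrac12\bigl(\bsfU_i^n+\bsfU_j^n\bigr)-\tfrac1{2d_{ij}^n}\bigl(\polf(\bsfU_j^n)-\polf(\bsfU_i^n)\bigr)\bc_{ij}.
\]
By Lemma~\ref{Lem:InvarBar}, the bound $\lambda_{ij}^n\ge\lambda_{\max}(\bn_{ij},\bsfU_i^n,\bsfU_j^n)$ yields $\overline{\bsfU}_{ij}^n\in\calB$ together with $\eta(\overline{\bsfU}_{ij}^n)\le\tfrac12\bigl(\eta(\bsfU_i^n)+\eta(\bsfU_j^n)\bigr)-\tfrac1{2d_{ij}^n}\bigl(\bq(\bsfU_j^n)-\bq(\bsfU_i^n)\bigr)\SCAL\bc_{ij}$.

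The heart of the argument is the algebraic identity
\[
  \bsfU_i\upnp=\Bigl(1-\sum_{j\in\calI(i)^*}\tfrac{2\dt\,d_{ij}^n}{m_i}\Bigr)\bsfU_i^n+\sum_{j\in\calI(i)^*}\tfrac{2\dt\,d_{ij}^n}{m_i}\,\overline{\bsfU}_{ij}^n,
\]
which I would verify from $2d_{ij}^n(\overline{\bsfU}_{ij}^n-\bsfU_i^n)=d_{ij}^n(\bsfU_j^n-\bsfU_i^n)-(\polf(\bsfU_j^n)-\polf(\bsfU_i^n))\bc_{ij}$; summing over $j\in\calI(i)^*$ reproduces exactly $\tfrac{m_i}{\dt}(\bsfU_i\upnp-\bsfU_i^n)$ in view of the rewriting of the flux term above. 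This bookkeeping of the $\bsfU_i^n$, $\bsfU_j^n$, flux and viscosity contributions is the step most prone to slips, but it is purely computational. Under the restriction $0<\dt\le m_i/\sum_{j\in\calI(i)^*}2d_{ij}^n$ the weights $\tfrac{2\dt\,d_{ij}^n}{m_i}$ are positive and sum to at most $1$, so $\bsfU_i\upnp$ is a convex combination of the states $\bsfU_i^n$ and $\overline{\bsfU}_{ij}^n$, all of which lie in $\calB$; since $\calB$ is convex, \eqref{eq1:Thm:UL_is_invariant} follows.

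For \eqref{eq2:Thm:UL_is_invariant}, I would apply Jensen's inequality (the entropy $\eta$ is convex by definition of an entropy pair) to the convex-combination identity and then insert the entropy bound on $\eta(\overline{\bsfU}_{ij}^n)$ from Lemma~\ref{Lem:InvarBar}. Multiplying through by $m_i/\dt$, the $\eta(\bsfU_i^n)$ coefficients cancel, the viscous terms collect into $\sum_{j\in\calI(i)^*}d_{ij}^n(\eta(\bsfU_j^n)-\eta(\bsfU_i^n))$, and the entropy-flux terms collect into $-\sum_{j\in\calI(i)^*}(\bq(\bsfU_j^n)-\bq(\bsfU_i^n))\SCAL\bc_{ij}=-\sum_{j\in\calI(i)}\bc_{ij}\SCAL\bq(\bsfU_j^n)$, where the last equality again uses \eqref{cijprop}. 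Rearranging gives exactly \eqref{eq2:Thm:UL_is_invariant}; no hypothesis beyond those already assumed is needed.
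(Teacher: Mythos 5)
Your proposal is correct and follows essentially the same route as the paper: rewrite the update via the conservation property \eqref{cijprop} as a convex combination of $\bsfU_i^n$ and the bar states $\overline{\bsfU}_{ij}^n=\overline{\bu}_{ij}(d_{ij}^n/\|\bc_{ij}\|_{\ell^2})$, use $d_{ij}^n\ge d_{ij}\upGMSn$ to get $\lambda_{ij}\ge\lambda_{\max}(\bn_{ij},\bsfU_i^n,\bsfU_j^n)$, and invoke Lemma~\ref{Lem:InvarBar} together with convexity of $\calB$ and of $\eta$ (your detailed entropy bookkeeping, yielding the viscous term $d_{ij}^n(\eta(\bsfU_j^n)-\eta(\bsfU_i^n))$, matches the argument the paper spells out later in the proof of Theorem~\ref{Thm:UL_is_invariant_greedy} and the intended reading of \eqref{eq2:Thm:UL_is_invariant}).
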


\begin{proof}
  We refer to Theorem~4.1 and Theorem~4.5 in~\citep{Guermond_Popov_SINUM_2016}
  for detailed proofs. But since these proofs contain ideas that are going
  to be used latter in the paper, we now reproduce the key arguments. Using
  the conservation property \eqref{cijprop}, \ie $\sum_{j\in \calI(i)}
  \bc_{ij} = \bzero$, we rewrite~\eqref{low_order_scheme} as follows:
  \begin{equation*}
    \frac{m_i}{\dt}(\bsfU_i\upnp-\bsfU_i^n) +\!\!\sum_{j\in
    \calI(i)^*} \Big(2d_{ij}\upn \bsfU^n_i
    + ( \polf(\bsfU_j^n) - \polf(\bsfU_i^n) ) \bc_{ij}
    - d_{ij}\upn (\bsfU^n_j + \bsfU^n_i)\Big)  = \bzero.
  \end{equation*}
  Then, recalling that $d_{ij}\upn>0$ by assumption, we introduce the
  auxiliary states
  \begin{align}
    \label{def_barstates}
    \overline{\bsfU}_{ij}\upn :=
    \frac{1}{2}(\bsfU^{n}_i + \bsfU^{n}_j)
    -(\polf(\bsfU_j^n) - \polf(\bsfU_i^n))\bn_{ij}
    \frac{\|\bc_{ij}\|_{\ell^2}}{2 d_{ij}\upn}.
  \end{align}
  This allows us to rewrite~\eqref{low_order_scheme} as follows:
  \begin{align}
    \label{def_dij_scheme_convex}
    \bsfU_i\upnp = \bigg(1 -\!\! \sum_{j \in \calI(i)^*}
    \frac{2 \dt d_{ij}\upn}{m_i} \bigg)
    \bsfU_{i}^{n} + \!\! \sum_{j \in \calI(i)^*} \frac{2 \dt
    d_{ij}\upn}{m_i} \overline{\bsfU}_{ij}\upn.
  \end{align}
  Since we assumed that $1-2\frac{\dt}{m_i}\sum_{j\in\calI(i)^*}d_{ij}^n>
  0$, the right-hand side in the above identity is a convex combination of
  the states $\{\overline{\bsfU}_{ij}\upn\}_{j\in \calI(i)}$ with the
  convention $\overline{\bsfU}_{ii}\upn := \bsfU^{n}_i$. Setting
  $\lambda_{ij}:= \frac{d_{ij}^n}{\|\bc_{ij}\|_{\ell^2}}$ and recalling
  definition~\eqref{def_generic_barstates}, we observe that
  $\overline{\bsfU}_{ij}\upn = \overline{\bu}_{ij}(\lambda_{ij})$ (here, with
  slight abuse of notation, we write $\overline{\bu}_{ij}(\lambda)$ instead
  of $\overline{\bu}_{LR}(\lambda)$). Then the assumption $d_{ij}^n\ge
  d_{ij}\upGMSn$ implies that $\lambda_{ij} \ge
  \lambda_{\max}(\bn_{ij},\bsfU_i^n,\bsfU_j^n)$, and the rest of the proof
  readily follows by invoking Lemma~\ref{Lem:InvarBar} (in particular
  $\overline{\bsfU}_{ij}\upn = \overline{\bu}_{ij}(\lambda_{ij}) =
  \overline\bv(\frac{1}{2\lambda_{ij}},\bn_{ij},\bsfU_i^n,\bsfU_j^n)$).
\end{proof}

\begin{remark}[$\lambda_\epsilon$ and $\lambda_{\max}^\calV$]
  \label{Rem:sonic_point} \label{Rem:lambda_epsilon}
  The expression~\eqref{def_barstates} (and thereby the identity
  \eqref{def_dij_scheme_convex} as well) is ill-defined if
  $\lambda_{\max}(\bn_{ij},\bsfU_i^n, \bsfU_j^n)=0$, (recall that
  Lemma~\ref{Lem:InvarBar} requires that one should take
  $\lambda\ge \lambda_{\max}(\bn_{ij},\bsfU_i^n, \bsfU_j^n)$). To
  avoid the division by zero issue, we introduce a small number
  $\epsilon\in (0,1)$ and we define
  \begin{subequations}
    \begin{align}
      &\lambda_{\max}^\calV \eqq \max_{i\in\calV,
      j\in\calI(i)}\lambda_{\max}(\bn_{ij},\bsfU_i^n,\bsfU_j^n),\qquad
      \lambda_\epsilon \eqq \epsilon \lambda_{\max}^\calV,
      \label{def_lambda_0}
      \\
      &\lambda_{ij}^{\sharp}\eqq\max(\lambda_\epsilon,\lambda_{\max}(\bn_{ij},
      \bsfU_i^n,\bsfU_j^n)).
      \label{def_lambda_sharp}
    \end{align}
  \end{subequations}
  Henceforth we assume that $\lambda_{\max}^\calV>0$, which implies
  $\lambda_\epsilon>0$. Otherwise the wave speed is zero everywhere, the
  solution is constant in time, and there is nothing to update. We are now
  going to consider the auxiliary states $\overline\bu_{ij}(\lambda)$ and
  \eqref{def_barstates} for $\lambda\in
  [\lambda_\epsilon,\lambda_{ij}^{\sharp}]$.
\end{remark}

\begin{remark}[Key observation]
  \label{Rem:key_observations}
  The statements \eqref{eq1:Thm:UL_is_invariant} and
  \eqref{eq2:Thm:UL_is_invariant} in Theorem~\ref{Thm:UL_is_invariant}
  are consequences
  of~\eqref{eq2:Lem:InvarBar}-\eqref{eq3:Lem:InvarBar} in
  Lemma~\ref{Lem:InvarBar}. And the assertions
  \eqref{eq2:Lem:InvarBar}-\eqref{eq3:Lem:InvarBar} hold true because
  $\lambda\ge \lambda_{\max}(\bn,\bu_L,\bu_R)$ implies the
  identity~\eqref{eq1:Lem:InvarBar}, \ie
  $\overline{\bu}_{LR} =
  \overline\bv(\frac{1}{2\lambda},\bn,\bu_L,\bu_R)$. We note,
  though, that $\lambda\ge \lambda_{\max}(\bn,\bu_L,\bu_R)$ (and
  thus identity~\eqref{eq1:Lem:InvarBar}) is just a \emph{sufficient}
  condition for \eqref{eq2:Lem:InvarBar}-\eqref{eq3:Lem:InvarBar} to
  hold true. The remainder of the paper is dedicated to estimating a
  \emph{greedy} wave speed
  $\lambda_{LR}^{\textup{grdy}}\,\in\,[\lambda_\epsilon,
  \lambda_{LR}^{\sharp}]$ (depending on $\bn$, $\bu_L$ and
  $\bu_R$) that is as small as possible so that
  \eqref{eq2:Lem:InvarBar}-\eqref{eq3:Lem:InvarBar} still holds,
  although \eqref{eq1:Lem:InvarBar} may no longer hold. For this wave
  speed $\lambda_{ij}^{\textup{grdy}}$ all the assertions in
  Theorem~\ref{Thm:UL_is_invariant} still hold true after redefining
  the viscosity
  $d_{ij}^n:=
  \max\big(\lambda_{ij}^{\textup{grdy}}\|\bc_{ij}\|_{\ell^2},
  \lambda_{ji}^{\textup{grdy}}\|\bc_{ji}\|_{\ell^2}\big).$
  This minimization program is reasonable since in
  the worst case scenario setting $\lambda = \lambda_{LR}^\sharp\ge
  \lambda_{\max}(\bn,\bu_L,\bu_R)$ is always admissible, \ie the
  minimizing set for $\lambda$ is not empty.
\end{remark}

\begin{remark}[Literature] The importance of the auxiliary states
  $\overline{\bu}_{LR}(\lambda)$, which are the backbone of Lax's scheme,
  has been recognized in \cite[Eq.~(2.6)]{Nessyahu_Tadmor_1990}. That these
  states are averages of Riemann solutions provided $\lambda$ is larger than
  $\lambda_{\max}$ is well documented in
  \cite[\S3.A]{Harten_Lax_VanLeer_1983} (see also the reference to a
  private communication with Harten at p.~375, line 12 in
  \cite{Tadmor_1984}). A variant of Lemma~\ref{Lem:InvarBar} is invoked to
  prove Theorem~3.1 in \citep{Harten_Lax_VanLeer_1983}. This theorem is a
  somewhat simplified version of Theorem~\ref{Thm:UL_is_invariant}.
 \end{remark}


\section{Greedy wave speed and greedy viscosity}
\label{Sec:General_strategy}
The key idea of the paper is introduced in this section. Let $\calB$ be a
convex invariant domain for \eqref{def:hyperbolic_system}. In this entire
section $\bn$ is a unit vector and $\bu_L,\bu_R$ are two states in
$\calB$. The important results of this section are the definitions
\eqref{def_of_greedy_viscosity}-\eqref{def_dij_greedy} and
Theorem~\ref{Thm:UL_is_invariant_greedy}. Owing to
Lemma~\ref{Lem:InvarBar}, we know that the invariant-domain property
\eqref{eq2:Lem:InvarBar} and the entropy
inequality~\eqref{eq3:Lem:InvarBar} hold for $\overline{\bu}_{LR}(\lambda)$
if $\lambda\ge \lambda_{\max}(\bn,\bu_L,\bu_R)$. Our objective in this
paper is to find a value of $\lambda$ as small as possible in the interval
$[\lambda_\epsilon, \lambda_{LR}^{\sharp}]$ so that
\eqref{eq2:Lem:InvarBar} and \eqref{eq3:Lem:InvarBar} still hold (we no
longer require that \eqref{eq1:Lem:InvarBar} be true). The actual
estimation of this greedy wave speed in done
\S\ref{Sec:def_greedy_wave_speed}.

\subsection{Invariant domain and entropy: structural assumptions}
\label{Sec:Structural_assumptions}

As the notion of an invariant domain of the PDE
system~\eqref{def:hyperbolic_system} is too general, we list in this
section the properties that we want to preserve. We use the concept of
quasiconcavity for his purpose. (The reader who is not familiar with this notion
can replace the word \emph{quasiconcavity} by \emph{concavity} without losing
the essence of what is said.)
\begin{definition}[Quasiconcavity]\label{def:quasiconv} Given a convex set
  $\calC \subset \Real^m$, we say that a function $\Psi:\calC \to \Real$ is
  quasiconcave if the set $L_\chi(\Psi) := \{\bu\in \calC \st \Psi(\bu)
  \ge \chi \}$ is convex for every $\chi\in \Real$. The sets
  $\{L_\chi(\Psi)\}_{\chi\in \Real}$ are called \emph{upper level sets} or
  \emph{upper contour sets}.
\end{definition}

We now list the properties we are interested in and that we want to
preserve. Let $L\in\polN{\setminus}\{0\}$ and let us set $\calL\eqq
\intset{0}{L}$, $\calL^*\eqq \intset{1}{L}$. We assume that there
exists a collection of $L+1$ subsets $\{\calB_l\}_{l\in\calL}$ in
$\Real^m$, and a collection of $L$ continuous quasiconcave functionals
$\{\Psi_l:\calB_{l-1} \to \Real\}_{l\in\calL^*}$ so that the following
properties hold true:%
\begin{subequations}
  \label{Ass_Psi}%
  \begin{align}%
    &\calB_{L}\subset \calB_{L-1}\subset\ldots\subset \calB_0\eqq \Real^m, \label{Ass_Psi:1}
    \\
    &\calB_{l}=\{ \bu\in \calB_{l-1}\st \Psi_{l}(\bu)\ge 0\}, \qquad \qquad\forall l\in\calL^*,\label{Ass_Psi:2}
    \\
    &\calB_{L} \subset \calB,
    \label{Ass_Psi:3}\\
    &\bu_L, \bu_R \in \calB_l,\quad\text{and}\quad
    \overline{\bu}_{LR}(\lambda_{LR}^{\sharp})\in \calB_l,\quad  \forall l\in\calL.
    \label{Ass_Psi_4}
  \end{align}
\end{subequations}
Notice in passing that all the subsets $\{\calB_l\}_{l\in\calL}$ are
convex since $\calB_0=\Real^m$ and $\calB_{l} = L_0(\Psi_l)$ for all
$l\in\calL^*$. These sets are also closed as the functional
$\{\Psi_l\}_{l\in\calL}$ are continuous. As $\calB_l$ is convex for
all $l\in\calL$, the assumption
\eqref{Ass_Psi_4} then implies that
$\overline{\bu}_{LR}(\lambda)\in \calB_l$ for all
$\lambda\in[\lambda_{LR}^{\sharp}, \infty)$ and
all $l\in\calL$. (The assumption
\eqref{Ass_Psi_4} is reasonable as we already know that
$\overline{\bu}_{LR}(\lambda)\in \calB$ for all
$\lambda\in[\lambda_{LR}^{\sharp}, \infty)$.)

As documented in Appendix A in \cite{Harten_Hyman_1983} (and in Lemma~3.2
in \citep{guermond_popov_second_order_2018}), computing a wave speed that
guarantees a method to be invariant-domain preserving is not enough to
ensure convergence to the entropy solution. Hence, in addition to
invariant-domain properties, we also want to satisfy entropy inequalities.
In order to clarify this objective, we assume to be given a finite set of
entropy pairs for~\eqref{def:hyperbolic_system}, say
$\{(\eta_e,\bq_e)\}_{e\in\calE}$ with $\eta_e:\calB_{L}\to \Real$ and $\bq_e:\calB_{L}\to \Real^d$ for
all $e\in\calE$. Let $\lambda_{LR}^\flat$ be the infimum of the set
$\{\lambda\in [\lambda_\epsilon,\lambda_{LR}^\sharp] \st
\overline{\bu}_{LR}(\lambda) \in \calB_L\}$; that is,
\begin{equation}
  \lambda_{LR}^\flat\eqq \inf \{\lambda\in [\lambda_\epsilon,\lambda_{LR}^\sharp] \st
  \overline{\bu}_{LR}(\lambda) \in \calB_L\}.
\end{equation}
 Note that
$\lambda_{LR}^\flat$ is well defined because the minimizing set is not
empty (it contains $\lambda_{LR}^\sharp$). This infimum is actually
the minimum as
$[\lambda_\epsilon,\lambda_{LR}^\sharp]\ni \lambda\mapsto \bu_{LR}(\lambda)$
is continuous and $\calB_L$ is closed. For every $e\in\calE$, we
introduce the function
$\Phi^e:[\frac{1}{\lambda_{LR}^\sharp},\frac{1}{\lambda_{LR}^\flat}]\to \Real$ defined
by
\begin{equation}%
\Phi_e(t) :=\eta_e\left(\overline\bu_{LR}(\tfrac{1}{t})\right)- \frac{1}{2}(\eta_e(\bu_L)+\eta_e(\bu_R))
      + \frac{t}{2}(\bq_e(\bu_R) - \bq_e(\bu_L))\SCAL\bn, \quad
      \forall e\in\calE.%
      \label{def_Phi_e}
\end{equation}%
We have established in Lemma~\ref{Lem:InvarBar} that
\begin{equation}%
  \Phi_e(1/\lambda_{LR}^{\sharp})\le 0,\quad \forall e\in\calE. \label{Ass_Ulbd_max:2}%
\end{equation}%
Our goal is to find a greedy wave speed
$\lambda^{\textup{grdy}}(\bn,\bu_L,\bu_R)$ as small as possible in
$[\lambda_{LR}^\flat,\lambda_{LR}^\sharp]$ so that
$\overline{\bu}_{LR}(\lambda^{\textup{grdy}}) \in \calB_L$ and
$\Phi_e(1/\lambda^{\textup{grdy}})\le 0$, for all $e\in\calE$.
\begin{lemma} \label{Lem:Phie_is_convex}%
   The function
   $\Phi_e:(\frac{1}{\lambda_{LR}^\sharp},\frac{1}{\lambda_{LR}^\flat})\to
   \Real$ is convex  for all $e\in\calE$.
\end{lemma}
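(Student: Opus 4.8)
The plan is to make the change of variable $\lambda=1/t$ explicit and observe that it turns the auxiliary state into an affine function of $t$, after which convexity of $\Phi_e$ reduces to the (assumed) convexity of the entropy $\eta_e$.

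First I would substitute $\lambda=1/t$ into \eqref{def_generic_barstates}, obtaining
\[
  \overline{\bu}_{LR}(1/t) = \tfrac12(\bu_L+\bu_R) - \tfrac{t}{2}\bigl(\polf(\bu_R)-\polf(\bu_L)\bigr)\bn,
\]
which is of the form $\ba + t\,\bb$ with $\ba,\bb\in\Real^m$ independent of $t$; in particular $t\mapsto \overline{\bu}_{LR}(1/t)$ is affine on $[\tfrac{1}{\lambda_{LR}^\sharp},\tfrac{1}{\lambda_{LR}^\flat}]$.

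Second, I would check that the composition $t\mapsto \eta_e(\overline{\bu}_{LR}(1/t))$ is well defined on that interval, i.e.\ that $\overline{\bu}_{LR}(1/t)\in\calB_L$ there, $\calB_L$ being the domain of $\eta_e$. As $t$ ranges over $[\tfrac{1}{\lambda_{LR}^\sharp},\tfrac{1}{\lambda_{LR}^\flat}]$ the affine map above traces the segment joining $\overline{\bu}_{LR}(\lambda_{LR}^\sharp)$ and $\overline{\bu}_{LR}(\lambda_{LR}^\flat)$; the first endpoint lies in $\calB_L$ by \eqref{Ass_Psi_4}, the second lies in $\calB_L$ by the definition of $\lambda_{LR}^\flat$ together with the closedness of $\calB_L$ (so the infimum is attained). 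Since $\calB_L$ is convex, the whole segment is contained in $\calB_L$, and the composition makes sense.

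Third, since $\eta_e$ is convex (being an entropy) and $t\mapsto \overline{\bu}_{LR}(1/t)$ is affine, the composition $t\mapsto \eta_e(\overline{\bu}_{LR}(1/t))$ is convex on the interval. In \eqref{def_Phi_e} the remaining contribution $-\tfrac12(\eta_e(\bu_L)+\eta_e(\bu_R))$ is constant in $t$ and $\tfrac{t}{2}(\bq_e(\bu_R)-\bq_e(\bu_L))\SCAL\bn$ is affine in $t$; hence $\Phi_e$ is the sum of a convex function and an affine function of $t$, and is therefore convex. The argument is essentially elementary; the only point that requires a little care is the well-definedness of the composition in the second step, which is exactly where the convexity and closedness of $\calB_L$ enter, and this presents no real obstacle.
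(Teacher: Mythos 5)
Your proposal is correct and follows essentially the same route as the paper: both proofs rest on the observation that $t\mapsto \overline{\bu}_{LR}(1/t)$ is affine, so that composing with the convex entropy $\eta_e$ and adding the constant and linear-in-$t$ terms of \eqref{def_Phi_e} yields a convex function. Your extra verification that the segment stays in $\calB_L$ (via \eqref{Ass_Psi_4}, the attainment of $\lambda_{LR}^\flat$, and convexity of $\calB_L$) is a sound tidying-up of a point the paper handles implicitly when defining $\Phi_e$, but it does not change the argument.
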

\begin{proof}Let $t_1,t_2\in (\frac{1}{\lambda_{LR}^\sharp},\frac{1}{\lambda_{LR}^\flat})$ and $\theta\in [0,1]$. Then
using that
\begin{align*}
\overline\bu_{LR}(\tfrac{1}{\theta t_1+(1-\theta)t_2})
&= \tfrac{\theta}{2} (\bu_L+\bu_R) \!+\! \tfrac{1-\theta}{2}(\bu_L+\bu_R)
-(\tfrac{\theta}{2} t_1 \!+\! \tfrac{1-\theta}{2} t_2) (\polf(\bu_R)-\polf(\bu_L))\bn\\
& = \theta \overline\bu_{LR}(\tfrac{1}{t_1})
+ (1-\theta)\overline\bu_{LR}(\tfrac{1}{t_2}),
\end{align*}
the assertion follows from the convexity of $\eta_e$.
\end{proof}

\begin{remark}[Notation]
  To be precise the entropy functional defined in \eqref{def_Phi_e} should
  be denoted by $\Phi^e_{LR}$ instead $\Phi_e$ as it depends on the pair
  $\bu_L$, $\bu_R$. Likewise, we should also use $\Psi^l_{LR}$ instead of
  $\Psi_l$. In what follows the index $_{LR}$ reminds us of the dependence
  with respect the pair $\bu_L,$ $\bu_R$ and the unit vector $\bn$. We have
  chosen to use the symbols $\Phi_e$ and $\Psi_l$ instead to simplify the
  notation.
\end{remark}

\begin{remark}[Matryoshka doll structure]
  The \emph{Matryoshka doll structure} introduced in \eqref{Ass_Psi} is
  meant to reflect that the domain of definition of the functionals
  $\Psi_l$ may become smaller and smaller as the index $l$ increases. We
  illustrate this point with the compressible Euler equations with the
  equation of state $p(\bu)\eqq \frac{\gamma-1}{1-b\rho}\rho(e(\bu)-q)
  -\gamma p_\infty$, where $b\ge 0$, $\gamma>1$, $q\in\Real$, and $p_\infty
  \in \Real$, and $e(\bu)\eqq E/\rho-\frac{1}{2} \|\bbm/\rho\|_\ell^2$.
  This equation of state is often called Nobel-Abel stiffened gas equation
  of state in the literature; see \cite{LeMetayer_Saurel_2016}. In this
  case we have: $\Psi_1(\bu) \eqq \rho$, $\calB_1\eqq
  \{\bu=(\rho,\bbm,E)\tr \in\Real^{d+2}\st \rho>0\}$; $\Psi_2(\bu) \eqq 1-
  b\rho$, $\calB_2\eqq \{\bu\in\calB_1\st 1-b\rho>0\}$; $\Psi_3(\bu) \eqq
  \rho(e(\bu) -q) - p_\infty(1-b\rho)$, $\calB_3\eqq  \{\bu\in\calB_2\st
  \rho(e(\bu) -q) - p_\infty(1-b\rho)>0\}$. Notice that the constraint
  $\Psi_3(\bu)>0$ implies that $p(\bu)+p_\infty>0$ which is essential to be
  able to define the specific entropy $\eta(\bu)
  =\log((1/\rho-b)^{\gamma}(p(\bu)+p_\infty))$.
\end{remark}

In practice, we are going to enforce sharper bounds than those shown above
by making all the functionals $\{\Psi_l\}_{l\in\calL}$ and all the sets
$\{\calB_l\}_{l\in\calL}$ depend on the states $\bu_L$ and $\bu_R$ (see
\S\ref{Sec:Scalar_equations} and \S\ref{Sec:p_system}).

\subsection{Algorithm for estimating the greedy wave speed}
\label{Sec:def_greedy_wave_speed}

As mentioned above, the key idea of the paper is to define a greedy
wave speed $\lambda^{\textup{grdy}}(\bn,\bu_L,\bu_R)$ in
$[\lambda_{LR}^\flat,\lambda_{LR}^\sharp]$ so that
$\overline{\bu}_{LR}(\lambda^{\textup{grdy}}) \in \calB_L$ and
$\Phi_e(1/\lambda^{\textup{grdy}})\le 0$, for all $e\in\calE$. We
now present an algorithm that carries out this program (see
Algorithm~\ref{Greedy_algorithm}).

One starts by setting
$\lambda_0(\bn,\bu_L,\bu_R)\eqq\lambda_\epsilon$. Then one traverses
the index set $\calL^*$ in increasing order, and for each index $l$ in
$\calL^*$ one computes the wave speed $\lambda_l(\bn,\bu_L,\bu_R)$
recursively defined by
\begin{align}
  &\lambda_l:=
  \min\{ \lambda\in [\lambda_{l-1},\lambda_{LR}^{\sharp}]\st
  \Psi_l(\overline\bu_{LR}(\lambda))\ge 0\}.
  \label{Abstract_Psil}
\end{align}
Next, one (indiscriminately) traverses the index set $\calE$ and
computes the wave speed $\lambda_e(\bn,\bu_L,\bu_R)$ defined by
\begin{align}
  &\lambda_e:=
  \min\{\lambda\in [\lambda_L,\lambda_{LR}^{\sharp}]\st
  \Phi_e(\lambda^{-1})\le 0\}.
  \label{Abstract_etae}
\end{align}
One finally defines the greedy wave speed $\lambda\upgrdy(\bn,\bu_L,\bu_R)$ as follows:
\begin{equation}
 \lambda\upgrdy(\bn,\bu_L,\bu_R) :=
\max_{e\in \calE} \lambda_e. \label{def_of_greedy_viscosity}
\end{equation}
Techniques to compute the wave speed defined in \eqref{Abstract_Psil} and
\eqref{Abstract_etae} are explained in \S\ref{Sec:Maximum_principle} and
\S\ref{Sec:entropy_ineq_lambda_scalar} for nonlinear scalar equations and
in \S\ref{IDP_p_system} and \S\ref{Sec:p_system_entropy_lambda} for the
p-system.

 \begin{lemma}\label{Lem:def_lambdas}
   Assume that \eqref{Ass_Psi} hold true. Then,
\begin{enumerate}[\rm(i)]
\item \label{Item1:Lem:def_lambdas} $\lambda_l$ is well defined and
  $\lambda_\epsilon\le \lambda_l \le \lambda_{LR}^\sharp$ for all $l\in\calL$.
We have
  $\overline\bu_{LR}(\lambda) \in \calB_{l}$
  for all $\lambda\in [\lambda_{l},\lambda_{LR}^\sharp]$ and all $l\in\calL$.
\item \label{Item2:Lem:def_lambdas}
  $\lambda_e$ is well defined.
 We have $\Phi_e(\frac{1}{\lambda}) \le 0$ for all $\lambda \in [\lambda_e,\lambda_{LR}^\sharp)]$
and all $e\in\calE$.
\end{enumerate}
\end{lemma}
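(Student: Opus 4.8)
The plan is to prove the two items by structural induction along the Matryoshka chain \eqref{Ass_Psi}, using at each step that the map $\lambda\mapsto\overline\bu_{LR}(\lambda)$ is continuous, that the relevant sets/sublevel sets are closed, and that the feasible set always contains the right endpoint $\lambda_{LR}^\sharp$ (so minima in \eqref{Abstract_Psil} and \eqref{Abstract_etae} are attained and the interval is nonempty). The crucial structural fact to exploit is the one already noted after \eqref{Ass_Psi_4}: since $\overline\bu_{LR}(\lambda)$ is an affine function of $1/\lambda$ and each $\calB_l$ is convex and contains both the endpoint state $\overline\bu_{LR}(\lambda_{LR}^\sharp)$ and the two data $\bu_L,\bu_R=\overline\bu_{LR}(\infty)$, membership in $\calB_l$ is preserved as $\lambda$ increases, i.e.\ it is an ``upward closed'' property on the $\lambda$-axis. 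Similarly, by Lemma~\ref{Lem:Phie_is_convex}, $\Phi_e$ is convex on $(\frac{1}{\lambda_{LR}^\sharp},\frac{1}{\lambda_{LR}^\flat})$, it is $\le0$ at the endpoint $1/\lambda_{LR}^\sharp$ by \eqref{Ass_Ulbd_max:2}, and one checks $\Phi_e(0)\le 0$ as well (it equals $\eta_e(\bu_L)+\eta_e(\bu_R)$-type boundary value consistent with the trivial diffusion limit); hence the set where $\Phi_e(1/\lambda)\le0$ is again upward closed in $\lambda$.

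For item~\eqref{Item1:Lem:def_lambdas}, I would argue by induction on $l$. The base case $l=0$ is the definition $\lambda_0=\lambda_\epsilon$ together with \eqref{Ass_Psi:1}, which gives $\calB_0=\Real^m$ so $\overline\bu_{LR}(\lambda)\in\calB_0$ trivially for all $\lambda$. For the inductive step, assume $\lambda_\epsilon\le\lambda_{l-1}\le\lambda_{LR}^\sharp$ and $\overline\bu_{LR}(\lambda)\in\calB_{l-1}$ for all $\lambda\in[\lambda_{l-1},\lambda_{LR}^\sharp]$. Then on that interval $\Psi_l$ is defined (its domain is $\calB_{l-1}$), it is continuous, and by \eqref{Ass_Psi_4} the right endpoint $\lambda=\lambda_{LR}^\sharp$ satisfies $\Psi_l(\overline\bu_{LR}(\lambda_{LR}^\sharp))\ge0$ because $\overline\bu_{LR}(\lambda_{LR}^\sharp)\in\calB_l=L_0(\Psi_l)$. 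Hence the set $\{\lambda\in[\lambda_{l-1},\lambda_{LR}^\sharp]:\Psi_l(\overline\bu_{LR}(\lambda))\ge0\}$ is nonempty and closed, so $\lambda_l$ is well defined with $\lambda_{l-1}\le\lambda_l\le\lambda_{LR}^\sharp$, which also gives $\lambda_\epsilon\le\lambda_l$. It remains to show $\overline\bu_{LR}(\lambda)\in\calB_l$ for every $\lambda\in[\lambda_l,\lambda_{LR}^\sharp]$: by the inductive hypothesis such $\overline\bu_{LR}(\lambda)$ already lies in $\calB_{l-1}$, so by \eqref{Ass_Psi:2} it suffices to check $\Psi_l(\overline\bu_{LR}(\lambda))\ge0$, and this follows from quasiconcavity of $\Psi_l$ applied to the convex combination $\overline\bu_{LR}(\lambda)=\mu\,\overline\bu_{LR}(\lambda_l)+(1-\mu)\,\overline\bu_{LR}(\lambda_{LR}^\sharp)$ (some $\mu\in[0,1]$, obtained by writing $1/\lambda$ as a convex combination of $1/\lambda_l$ and $1/\lambda_{LR}^\sharp$), using that $\Psi_l\ge0$ at both of those arguments by definition of $\lambda_l$.

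For item~\eqref{Item2:Lem:def_lambdas}, once item~\eqref{Item1:Lem:def_lambdas} is proved we have $\overline\bu_{LR}(\lambda)\in\calB_L$ for all $\lambda\in[\lambda_L,\lambda_{LR}^\sharp]$, so each entropy $\eta_e$ (and flux $\bq_e$) is defined there and $\Phi_e(\lambda^{-1})$ makes sense for $\lambda\in[\lambda_L,\lambda_{LR}^\sharp]$. By \eqref{Ass_Ulbd_max:2} the point $\lambda=\lambda_{LR}^\sharp$ lies in $\{\lambda\in[\lambda_L,\lambda_{LR}^\sharp]:\Phi_e(\lambda^{-1})\le0\}$, so this set is nonempty; it is closed by continuity of $\Phi_e$ and of $\lambda\mapsto\overline\bu_{LR}(\lambda)$; hence $\lambda_e$ is well defined. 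To get $\Phi_e(1/\lambda)\le0$ for all $\lambda\in[\lambda_e,\lambda_{LR}^\sharp]$, convert to the variable $t=1/\lambda\in[1/\lambda_{LR}^\sharp,1/\lambda_e]$, note $t$ lies between $1/\lambda_{LR}^\sharp$ and $1/\lambda_e$ where $\Phi_e\le0$ by definition of $\lambda_e$ and by \eqref{Ass_Ulbd_max:2}, and invoke convexity of $\Phi_e$ from Lemma~\ref{Lem:Phie_is_convex}: a convex function that is $\le0$ at two points is $\le0$ on the whole segment between them. The main obstacle, and the only place requiring care, is the bookkeeping with the domains of definition: one must verify at each induction step that $\overline\bu_{LR}(\lambda)$ stays inside $\calB_{l-1}$ (resp.\ $\calB_L$) before $\Psi_l$ (resp.\ $\eta_e,\bq_e$) is evaluated, and that $\lambda_l\ge\lambda_{l-1}$ so the nested minimizations in \eqref{Abstract_Psil} are over nonempty intervals — both of which come out cleanly from the upward-closedness observation and \eqref{Ass_Psi_4}.
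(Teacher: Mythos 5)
Your proposal is correct and follows essentially the same route as the paper: induction along the Matryoshka chain using \eqref{Ass_Psi_4} for nonemptiness, continuity plus compactness for attainment of the minima, the fact that $\overline\bu_{LR}(\lambda)$ is affine in $1/\lambda$ so that $[\lambda_l,\lambda_{LR}^\sharp]$ maps to a segment with endpoints in $\calB_l$, and for item~(\ref{Item2:Lem:def_lambdas}) the convexity of $\Phi_e$ from Lemma~\ref{Lem:Phie_is_convex} together with \eqref{Ass_Ulbd_max:2}. Your use of quasiconcavity of $\Psi_l$ is just the level-set reformulation of the paper's appeal to convexity of $\calB_l=L_0(\Psi_l)$, and the aside about $\Phi_e(0)\le 0$ is harmless but not needed.
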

\begin{proof} Recall that
  $\lambda_{LR}^{\sharp}\eqq\max(\lambda_\epsilon,\lambda_{\max})$.\\
  \eqref{Item1:Lem:def_lambdas} We proceed by induction over
  $l\in\calL$.  The wave speed $\lambda_\epsilon$ is well defined (see
  \eqref{def_lambda_0}) and
  $\lambda_\epsilon \in
  [\lambda_\epsilon,\lambda_{LR}^\sharp]$. Moreover,
  $\overline{\bu}_{LR}(\lambda)\in \calB_0\eqq\Real^{m}$ for all
  $\lambda\in [\lambda_\epsilon,\lambda_{LR}^\sharp]$.  Hence, the
  induction assumption \eqref{Item1:Lem:def_lambdas} holds for $l=0$
  since $\lambda_0\eqq \lambda_\epsilon$.  Now let $l\in\calL^*$ and
  let us prove that \eqref{Item1:Lem:def_lambdas} holds. The induction
  assumption implies that the set
  $[\lambda_{l-1},\lambda_{LR}^\sharp]$ is not empty (because
  $\lambda_{l-1}\le \lambda_{LR}^\sharp$), and
  $\overline{\bu}_{LR}(\lambda) \in \calB_{l-1}$ for all
  $\lambda\in [\lambda_{l-1},\lambda_{LR}^\sharp]$. This means in
  particular that $\Psi_{l}(\overline{\bu}_{LR}(\lambda))$ is well
  defined for all $\lambda\in [\lambda_{l-1},\lambda_{LR}^\sharp]$.
  Moreover, we have
  $\overline{\bu}_{LR}(\lambda_{LR}^\sharp)\in\calB_{l}$ owing to the
  assumption \eqref{Ass_Psi_4}. Hence the set
  $\{\lambda\in [\lambda_{l-1},\lambda_{LR}^\sharp]\st
  \Psi_{l}(\overline{\bu}_{LR} (\lambda))\ge 0\}$ is not empty. This
  set has a minimum since $\Psi_l$ is continuous, the mapping
  $[\lambda_{l-1},\lambda_{LR}^\sharp] \ni \lambda\mapsto
  \overline{\bu}_{LR} (\lambda)\in \calB_{l-1}$ is continuous, and
  $[\lambda_{l-1},\lambda_{LR}^\sharp]$ is compact. Hence
  $\lambda_{l}$ is well defined and
  $\lambda_\epsilon\le \lambda_{l-1}\le
  \lambda_l\le\lambda_{LR}^\sharp$ (by definition). Let us now prove
  that $\overline{\bu}_{LR}(\lambda) \in \calB_{l}$ for all
  $ \lambda\in [\lambda_{l},\lambda_{LR}^\sharp]$. We first observe
  that
  $\overline{\bu}_{LR}(\lambda) = \theta\,
  \overline{\bu}_{LR}(\lambda_{l}) + (1-\theta)
  \overline{\bu}_{LR}(\lambda_{LR}^\sharp)$ with
  $\theta\eqq \frac{(\lambda_{LR}^\sharp -\lambda)\,\lambda_{l}}
  {(\lambda_{LR}^\sharp -\lambda_{l})\,\lambda}$; hence, the set
  $\{\overline{\bu}_{LR}(\lambda)\st \lambda\in
  [\lambda_{l},\lambda_{LR}^\sharp]\}$ is a line segment in
  $\Real^m$. But both $\overline{\bu}_{LR}(\lambda_{l})$ and
  $\overline{\bu}_{LR}(\lambda_{LR}^\sharp) $ are members of
  $\{ \bu \in \calB_{l-1}\st \Psi_{l}(\bu)\ge 0\}= \calB_{l}$. Since
  $\calB_{l}$ is convex, we conclude that the entire line segment
  $\{\overline{\bu}_{LR}(\lambda)\st \lambda\in
  [\lambda_{l},\lambda_{LR}^\sharp]\}$ is in $\calB_{l}$. This proves
  that the induction
  assumption holds true for $l$. \\
  \eqref{Item2:Lem:def_lambdas} The argument in
  \eqref{Item1:Lem:def_lambdas} proves that
  $\overline{\bu}_{LR}(\lambda)\in \calB_{L}$ for all
  $\lambda\in [\lambda_L,\lambda_{LR}^\sharp]$. As the domain of
  $\eta_e$ and $\bq_e$ is $\calB_{L}$, this argument proves that
  $\Phi_e(\frac{1}{\lambda})$ is well defined for all
  $\lambda\in [\lambda_L,\lambda_{LR}^\sharp]$ and all
  $e\in\calE$. The continuity of $\Phi_e$ implies that $\lambda_e$ is
  well defined as well. From the convexity of $\Phi_e$ established in
  Lemma~\ref{Lem:Phie_is_convex} it follows that
  $\Phi_e(\frac{1}{\lambda}) \le 0$ for all
  $\lambda \in [\lambda_e,\lambda_{LR}^\sharp]$ since
  $\Phi_e(\frac{1}{\lambda_e}) \le 0$ and
  $\Phi_e(\frac{1}{\lambda_{LR}^\sharp}) \le 0$,
  see~\eqref{Ass_Ulbd_max:2}.
 \end{proof}

\begin{algorithm}[t]
  \renewcommand{\algorithmicrequire}{\textbf{Input:}}
  \renewcommand{\algorithmicensure}{\textbf{Output:}}
  \caption{Greedy wave speed}
  \label{Greedy_algorithm}
  \begin{algorithmic}[1]
    \Require $\bn$, $\bu_L$, $\bu_R$
    \Ensure $\lambda\upgrdy(\bn,\bu_L,\bu_R)$
    \State Compute $\lambda_{\max}(\bn,\bu_L,\bu_R)$,
    $\lambda_0(\bn,\bu_L,\bu_R)$ and $\lambda_{LR}^\sharp$
    \For{$l=1 \textbf{ to } L$}
    \State Define $\Psi_l^{LR}$ and compute $\lambda_l(\bn,\bu_L,\bu_R)$;
    see \eqref{Abstract_Psil}
    \EndFor
    \For{$e\in \calE$}
    \State Define $(\eta_e^{LR},\bq_e^{LR})$ and compute $\lambda_e(\bn,\bu_L,\bu_R)$; see \eqref{Abstract_etae}
    \EndFor
    \State  Compute $\lambda\upgrdy(\bn,\bu_L,\bu_R)$; see~\eqref{def_of_greedy_viscosity}
  \end{algorithmic}
\end{algorithm}

\subsection{Greedy viscosity}
We are now in a position to state the main results of
\S\ref{Sec:General_strategy}. Using the same notation as in
\S\ref{Sec:aux_bar_states}, let $i\in\calV$ and $j\in\calI(i)$. With the
greedy wave speed $\lambda^{\textup{grdy}}(\bn_{ij},\bsfU_i^n,\bsfU_j^n)$
defined in \eqref{def_of_greedy_viscosity}, we define the greedy viscosity
for the pair $(i,j)$ at the time $t^n$ as follows:
\begin{equation}
  d_{ij}\upgrdyn = \max(\lambda\upgrdy(\bn_{ij},\bsfU_i^n,\bsfU_j^n)
  \|\bc_{ij}\|_{\ell^2},
  \lambda\upgrdy(\bn_{ji},\bsfU_j^n,\bsfU_i^n)\|\bc_{ji}\|_{\ell^2}).
  \label{def_dij_greedy}
\end{equation}
Note that if $\lambda_{\max}(\bn_{ij},\bsfU_i^n,\bsfU_j^n)\ge \lambda_\epsilon$
(which is almost always the case), then
\begin{equation}
  d_{ij}\upGMSn \ge d_{ij}\upgrdyn.
\end{equation}
The main result of the paper and the reason we have introduced the greedy
wave speed is the following.
\begin{theorem}[IDP Greedy viscosity]
  \label{Thm:UL_is_invariant_greedy} Let $\calB$ be an invariant
  domain for \eqref{def:hyperbolic_system}.
  Let $n\ge 0$, $i\in\calV$. For all $j\in\calI(i)^*$, let
  $\{\calB_l^{ij}\}_{l\in\calL}$ be a finite collection of convex sets,
  and let $\{\Psi_l^{ij}:\calB_l^{ij}\to
  \Real\}_{l\in\calL^*}$ be a collection of continuous quasiconcave
  functionals. Let $\{(\eta_e^i,\bq_e^i)\}_{e\in\calE^i}$ be a
  finite set of entropy pairs for~\eqref{def:hyperbolic_system}.
  (We use a superscript $i$ on the entropy pairs to allow for the
  possibility to choose a different set of entropies for each index
  $i\in\calV$.)
  Let $\{d_{ij}\upgrdyn\}_{j\in\calI(i)^*}$ be the greedy
  graph viscosity defined by \eqref{def_dij_greedy} and let
  $\{\bsfU_i\upnp\}_{i\in\calV}$ be the update defined in
  \eqref{low_order_scheme} with the choice $d_{ij}^n\eqq d_{ij}\upgrdyn$.
  Assume the following:
  \begin{enumerate}[\rm(i)]
  \item
    $\{\calB_l^{ij}\}_{l\in\calL}$ and
    $\{\Psi_l^{ij}\}_{l\in\calL^*}$ satisfy the assumptions
    in~\eqref{Ass_Psi} for all $j\in\calI(i)^*$;
  \item
    $\dt$ is small enough so that
    $1-2\frac{\dt}{m_i}\sum_{j\in\calI(i)^*}d_{ij}^n> 0$.
  \end{enumerate}
  Then the update $\{\bsfU_i\upnp\}_{i\in\calV}$ satisfies the following
  properties:
  \begin{align}
    &\bsfU_i\upnp\in\textup{conv}\,\Big(\bigcup_{j\in\calI(i)^*}\calB_{L}^{ij}\Big),
    \quad\text{hence}\quad\bsfU_i\upnp \in \calB,
    \label{eq1:Thm:UL_is_invariant_greedy}
    \\
    &\frac{m_i}{\dt} (\eta_e^i(\bsfU_i\upnp) - \eta_e^i(\bsfU_i^n))
    + \!\!\sum_{j\in\calI(i)}\!\! \bc_{ij}\SCAL \bq_e^i(\bsfU_j^n) -\!\!\!\!\!
    \sum_{j\in \calI(i)^*} \!\!\! d_{ij}^n
    (\eta_e^i(\bsfU_{j}^{n})  - \eta_e^i(\bsfU_{j}^{n})) \le 0.
    \label{eq2:Thm:UL_is_invariant_greedy}
  \end{align}
\end{theorem}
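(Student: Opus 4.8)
The plan is to replay the proof of Theorem~\ref{Thm:UL_is_invariant} almost verbatim, with the guaranteed‑maximum‑speed input replaced by the greedy wave speed and with Lemma~\ref{Lem:def_lambdas} used in place of Lemma~\ref{Lem:InvarBar}. First I would invoke the conservation identity $\sum_{j\in\calI(i)}\bc_{ij}=\bzero$ to rewrite \eqref{low_order_scheme} (with $d_{ij}^n\eqq d_{ij}\upgrdyn$) exactly in the form \eqref{def_dij_scheme_convex}: $\bsfU_i\upnp$ is a combination of $\bsfU_i^n$, with coefficient $1-\sum_{j\in\calI(i)^*}\frac{2\dt d_{ij}\upgrdyn}{m_i}$, and of the auxiliary states $\overline{\bsfU}_{ij}\upn\eqq\overline\bu_{ij}(\lambda_{ij})$, $j\in\calI(i)^*$, where $\lambda_{ij}\eqq d_{ij}\upgrdyn/\|\bc_{ij}\|_{\ell^2}$ (notation as in \eqref{def_generic_barstates}). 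Assumption~(ii) makes the coefficient of $\bsfU_i^n$ positive, so this is a genuine convex combination with the convention $\overline{\bsfU}_{ii}\upn\eqq\bsfU_i^n$.

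Second, I would locate $\lambda_{ij}$ in the window on which Lemma~\ref{Lem:def_lambdas} applies. Since $\bc_{ij}=-\bc_{ji}$ gives $\|\bc_{ij}\|_{\ell^2}=\|\bc_{ji}\|_{\ell^2}$ and $\bn_{ji}=-\bn_{ij}$, definition \eqref{def_dij_greedy} reads $\lambda_{ij}=\max(\lambda\upgrdy(\bn_{ij},\bsfU_i^n,\bsfU_j^n),\lambda\upgrdy(\bn_{ji},\bsfU_j^n,\bsfU_i^n))$, so in particular $\lambda_{ij}\ge\lambda\upgrdy(\bn_{ij},\bsfU_i^n,\bsfU_j^n)$. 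Reflecting $x\mapsto-x$ in the Riemann problem shows $\lambda_{\max}(\bn_{ji},\bsfU_j^n,\bsfU_i^n)=\lambda_{\max}(\bn_{ij},\bsfU_i^n,\bsfU_j^n)$, hence the two $\sharp$‑cutoffs coincide and, since $\lambda\upgrdy\le\lambda_{LR}^\sharp$ by \eqref{Abstract_etae}--\eqref{def_of_greedy_viscosity}, one gets $\lambda_{ij}\le\lambda_{ij}^\sharp$. Thus $\lambda_{ij}\in[\lambda\upgrdy(\bn_{ij},\bsfU_i^n,\bsfU_j^n),\lambda_{ij}^\sharp]$, which is contained both in $[\lambda_L^{ij},\lambda_{ij}^\sharp]$ and in $[\lambda_e^{ij},\lambda_{ij}^\sharp]$ for every $e$ (here $\lambda_L^{ij},\lambda_e^{ij}$ denote the quantities of \eqref{Abstract_Psil}--\eqref{Abstract_etae} built from the data $(\bn_{ij},\bsfU_i^n,\bsfU_j^n)$, the functionals $\{\Psi_l^{ij}\}$ and the entropies $\{(\eta_e^i,\bq_e^i)\}$, using $\lambda\upgrdy=\max_e\lambda_e^{ij}\ge\lambda_L^{ij}$). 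Invoking assumption~(i) and Lemma~\ref{Lem:def_lambdas}\eqref{Item1:Lem:def_lambdas}--\eqref{Item2:Lem:def_lambdas} then yields $\overline{\bsfU}_{ij}\upn\in\calB_L^{ij}$ and $\eta_e^i(\overline{\bsfU}_{ij}\upn)\le\frac12(\eta_e^i(\bsfU_i^n)+\eta_e^i(\bsfU_j^n))-\frac{1}{2\lambda_{ij}}(\bq_e^i(\bsfU_j^n)-\bq_e^i(\bsfU_i^n))\SCAL\bn_{ij}$ for every $j\in\calI(i)^*$ and every $e\in\calE^i$ (the latter is just \eqref{def_Phi_e} evaluated at $t=1/\lambda_{ij}$, i.e.\ $\Phi_e(1/\lambda_{ij})\le0$).

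Finally, the two conclusions follow as in Theorem~\ref{Thm:UL_is_invariant}. For \eqref{eq1:Thm:UL_is_invariant_greedy}: each $\overline{\bsfU}_{ij}\upn\in\calB_L^{ij}$ and $\bsfU_i^n=\bu_L\in\calB_L^{ij}$ by \eqref{Ass_Psi_4}, so the convex combination lies in $\mathrm{conv}(\bigcup_{j\in\calI(i)^*}\calB_L^{ij})$; since each $\calB_L^{ij}\subset\calB$ by \eqref{Ass_Psi:3} and $\calB$ is convex, also $\bsfU_i\upnp\in\calB$. For \eqref{eq2:Thm:UL_is_invariant_greedy}: for a fixed $e\in\calE^i$, convexity of $\eta_e^i$ applied to the convex combination, followed by the entropy bound above for $\eta_e^i(\overline{\bsfU}_{ij}\upn)$ and the identities $d_{ij}\upgrdyn/\lambda_{ij}=\|\bc_{ij}\|_{\ell^2}$, $\|\bc_{ij}\|_{\ell^2}\bn_{ij}=\bc_{ij}$, $\sum_{j\in\calI(i)}\bc_{ij}=\bzero$, gives the claimed inequality after rearrangement — the same computation as at the end of the proof of Theorem~\ref{Thm:UL_is_invariant}. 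I expect the only genuinely new point to be the second step: verifying that the symmetrizing maximum in \eqref{def_dij_greedy} does not push $\lambda_{ij}$ outside the range on which Lemma~\ref{Lem:def_lambdas} simultaneously certifies $\overline{\bu}_{ij}(\lambda_{ij})\in\calB_L^{ij}$ and the entropy bound, which hinges on $\|\bc_{ij}\|_{\ell^2}=\|\bc_{ji}\|_{\ell^2}$ together with the reflection invariance of $\lambda_{\max}$.
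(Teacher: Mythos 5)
Your proposal is correct and follows essentially the same route as the paper: rewrite the update as the convex combination \eqref{convex_bombination} with $\lambda_{ij}=d_{ij}\upgrdyn/\|\bc_{ij}\|_{\ell^2}$, invoke Lemma~\ref{Lem:def_lambdas} to place $\overline{\bsfU}_{ij}\upn$ in $\calB_L^{ij}$ and to get $\Phi_e(1/\lambda_{ij})\le 0$, then conclude by convexity and \eqref{cijprop}. Your explicit check that the symmetrizing maximum in \eqref{def_dij_greedy} keeps $\lambda_{ij}\le\lambda_{ij}^\sharp$ (via $\|\bc_{ij}\|_{\ell^2}=\|\bc_{ji}\|_{\ell^2}$ and the reflection invariance of $\lambda_{\max}$) is a point the paper's proof leaves implicit, and it is a welcome clarification rather than a deviation.
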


\begin{proof}
  We first recall that~\eqref{low_order_scheme} can be rewritten as
  follows:
  \begin{align}
    \bsfU_i\upnp =
    \bigg(1 -\!\! \sum_{j\in\calI(i)^*} \frac{2 \dt
    d_{ij}\upn}{m_i} \bigg)
    \bsfU_{i}^{n}
      + \!\! \sum_{j\in\calI(i)^*} \frac{2 \dt d_{ij}\upn}{m_i}
    \overline{\bsfU}_{ij}\upn, \label{convex_bombination}
  \end{align}
  with the notation $\overline{\bsfU}_{ij}\upn \eqq
  \overline{\bu}_{ij}(\frac{d_{ij}^n}{\|\bc_{ij}\|_{\ell^2}})$. Setting
  $\lambda_{ij}:= \frac{d_{ij}^n}{\|\bc_{ij}\|_{\ell^2}}$ for all
  $j\in\calI(i)^*$, we have $\overline{\bsfU}_{ij}\upn =
  \overline{\bu}_{ij}(\lambda_{ij})$. As the assumptions in
  \eqref{Ass_Psi} hold and $\lambda\upgrdy(\bn_{ij},\bsfU_i^n,\bsfU_j^n)$
  is defined by
  \eqref{Abstract_Psil}-\eqref{Abstract_etae}-\eqref{def_of_greedy_viscosity}
  for all $j\in\calI(i)^*$, we can apply
  Lemma~\ref{Lem:def_lambdas}.
  Then combining \eqref{def_of_greedy_viscosity}
  with the identity
  $\lambda_{ij} \|\bc_{ij}\|_{\ell^2} = d_{ij}^n = d_{ij}\upgrdyn$
  implies
  $\lambda_{ij} \ge \lambda_{ij}\upgrdy(\bn_{ij},\bsfU_i^n,\bsfU_j^n)$,
  and invoking Lemma~\ref{Lem:def_lambdas}\eqref{Item1:Lem:def_lambdas},
  \eqref{Ass_Psi:1} and \eqref{Ass_Psi:3}, we infer that
  \begin{align*}
    \overline{\bsfU}^n_{ij}\in \bigcap_{l\in\calL} \calB_l^{ij}
    = \calB_{L}^{ij} \subset \calB.
  \end{align*}
  Since we assumed that
  $1-2\frac{\dt}{m_i}\sum_{j\in\calI(i)^*}d_{ij}^n> 0$, the
  right-hand side in \eqref{convex_bombination} is a convex combination of
  the states $\{\bsfU_i^n\}\cup
  \{\overline{\bsfU}_{ij}\upn\}_{j\in\calI(i)^*}$ which all
  lie in
  the convex hull $\textup{conv} \Big(\bigcup_{j\in\calI(i)^*}\calB_{L}^{ij}\Big)$,
  and it follows that $\bsfU_i\upnp \in  \calB$. Let us now establish the
  entropy inequality \eqref{eq2:Thm:UL_is_invariant_greedy}. From the
  convexity of $\eta_e^i$ and \eqref{convex_bombination} we obtain
  \begin{align*}
    \eta_e^i(\bsfU_i\upnp) \le
    \bigg(1 -\!\! \sum_{j\in\calI(i)^*} \frac{2 \dt
    d_{ij}\upn}{m_i} \bigg) \eta_e^i(\bsfU_{i}^{n})
    + \!\! \sum_{j\in\calI(i)^*} \frac{2 \dt d_{ij}\upn}{m_i}
    \eta_e^i(\overline{\bsfU}_{ij}\upn).
  \end{align*}
  Using Lemma~\ref{Lem:def_lambdas}\eqref{Item2:Lem:def_lambdas} and
  recalling that $\overline{\bsfU}_{ij}\upn =
  \overline{\bu}_{ij}(\lambda_{ij})$, we infer that
  $\Phi_e(\frac{1}{\lambda_{ij}})\le 0$, \ie
  \begin{align*}
    2 d_{ij}\upn\eta_e^i(\overline{\bsfU}_{ij}\upn) \le
    d_{ij}\upn(\eta_e^i(\bsfU_i^n) + \eta_e^i(\bsfU_j^n)) -
    (\bq_e^i(\bsfU_j^n) - \bq_e^i(\bsfU_i^n))\SCAL\bc_{ij}.
  \end{align*}
  Inserting this inequality in the previous inequality and
  using~\eqref{cijprop} gives~\eqref{eq2:Thm:UL_is_invariant_greedy}.
\end{proof}
\begin{remark}
  More generally, Theorem~\ref{Thm:UL_is_invariant_greedy} holds true for
  any choice $\{d_{ij}^n\}_{j\in\calI(i)^*}$ of graph
  viscosity provided that $d_{ij}^n\ge d_{ij}\upgrdyn$ for all
  $j\in\calI(i)^*$, $i\in\calV$.
\end{remark}

The result stated in  Theorem~\ref{Thm:UL_is_invariant_greedy} can be
slightly refined by assuming a little more structure on the sets
$\{\calB^{ij}_l\}_{i\in\calI(i)^*}$ for all $l\in\calL$.
\begin{corollary}[Localization]
  \label{Cor:UL_is_invariant_greedy_refinement}
  Let the assumptions of Theorem~\ref{Thm:UL_is_invariant_greedy} hold.
  Assume also that the following holds true for all $l\in\calL^*$: There
  exists $i(l)\in\calI(i)^*$ so that $\calB^{ij}_l \subset \calB^{i
  i(l)}_l$ for all $j\in\calI(i)^*$. Then the update  given by
  \eqref{low_order_scheme} satisfies the following local properties:
  \begin{align}
    \Psi_l^{i i(l)}(\bsfU_i\upnp)\ge 0,\quad \forall l\in\calL^*.
  \end{align}
\end{corollary}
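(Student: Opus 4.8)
The plan is to reuse, verbatim, the convex-combination representation~\eqref{convex_bombination} of $\bsfU_i\upnp$ that was established in the proof of Theorem~\ref{Thm:UL_is_invariant_greedy}, and simply track into which of the nested sets each constituent of that combination falls. The new hypothesis — that for every level $l$ all the sets $\calB_l^{ij}$, $j\in\calI(i)^*$, sit inside one common set $\calB_l^{i\,i(l)}$ — is precisely what lets the argument localize from "$\bsfU_i\upnp\in\calB$'' to "$\Psi_l^{i\,i(l)}(\bsfU_i\upnp)\ge0$''.

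Concretely, I would fix $l\in\calL^*$ and first assemble the relevant membership facts. From the proof of Theorem~\ref{Thm:UL_is_invariant_greedy} we already know $\overline{\bsfU}_{ij}\upn\in\calB_L^{ij}$ for every $j\in\calI(i)^*$; chaining this with the Matryoshka inclusion $\calB_L^{ij}\subset\calB_l^{ij}$ coming from~\eqref{Ass_Psi:1} and then with the localization hypothesis $\calB_l^{ij}\subset\calB_l^{i\,i(l)}$ gives $\overline{\bsfU}_{ij}\upn\in\calB_l^{i\,i(l)}$ for all $j\in\calI(i)^*$. Separately, one needs $\bsfU_i^n$ itself to lie in $\calB_l^{i\,i(l)}$: this is obtained by applying the structural assumption~\eqref{Ass_Psi_4} to the pair $(\bsfU_i^n,\bsfU_{i(l)}^n)$ — the sets $\{\calB_l^{i\,i(l)}\}$ satisfy~\eqref{Ass_Psi} by hypothesis (i), and $\bsfU_i^n$ is the left Riemann state for that pair, so $\bsfU_i^n\in\calB_l^{i\,i(l)}$.

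Next I would invoke hypothesis (ii) of Theorem~\ref{Thm:UL_is_invariant_greedy}, which makes the coefficients in~\eqref{convex_bombination} nonnegative with unit sum, so that $\bsfU_i\upnp$ is a genuine convex combination of $\bsfU_i^n$ and $\{\overline{\bsfU}_{ij}\upn\}_{j\in\calI(i)^*}$. Since all of these points lie in the convex set $\calB_l^{i\,i(l)}$, so does $\bsfU_i\upnp$. Finally, by the defining relation~\eqref{Ass_Psi:2} one has $\calB_l^{i\,i(l)}=\{\bu\in\calB_{l-1}^{i\,i(l)}\st\Psi_l^{i\,i(l)}(\bu)\ge0\}$, whence $\Psi_l^{i\,i(l)}(\bsfU_i\upnp)\ge0$; since $l\in\calL^*$ was arbitrary, this is the claim.

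The step that takes the only real care — and hence the "main obstacle," modest as it is — is the observation that $\bsfU_i^n$ belongs to \emph{every} $\calB_l^{i\,i(l)}$: this is not part of the conclusion of Theorem~\ref{Thm:UL_is_invariant_greedy}, and it relies on recognizing that the state $\bsfU_i^n$ always plays the role of $\bu_L$ in the Riemann data for the pair $(i,j)$, so that~\eqref{Ass_Psi_4} applies. Everything else is bookkeeping with the nested-set structure and convexity, requiring no analytic input beyond what was already used for Theorem~\ref{Thm:UL_is_invariant_greedy}.
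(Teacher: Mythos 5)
Your proof is correct and follows essentially the same route as the paper: all the relevant states lie in the convex set $\calB_l^{i\,i(l)}$ by the nesting $\calB_L^{ij}\subset\calB_l^{ij}\subset\calB_l^{i\,i(l)}$, so the convex combination $\bsfU_i\upnp$ does too, and \eqref{Ass_Psi:2} gives $\Psi_l^{i\,i(l)}(\bsfU_i\upnp)\ge 0$. The only difference is cosmetic: the paper simply cites \eqref{eq1:Thm:UL_is_invariant_greedy} and intersects with the localization hypothesis, whereas you re-derive the convex-combination representation and handle $\bsfU_i^n$ separately via \eqref{Ass_Psi_4} (which is valid, since $i(l)\in\calI(i)^*$), a step the paper's shortcut makes unnecessary.
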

\begin{proof}
  The assumption together with with property
  \eqref{eq1:Thm:UL_is_invariant_greedy} from
  Theorem~\ref{Thm:UL_is_invariant_greedy} implies that
  \begin{align*}
    \bsfU_i\upnp\in\textup{conv}\,\Big(\bigcup_{j\in\calI(i)^*}
    \calB_{L}^{ij}\Big) \subset \Big(\bigcup_{j\in\calI(i)^*}
    \calB_{l}^{ij}\Big) = \calB^{i i(l)}_l.
  \end{align*}
  Hence, the statement is an immediate consequence of the definition of the
  set $\calB^{i i(l)}_l$ given in \eqref{Ass_Psi:2}.
\end{proof}

\begin{remark}
  Computing the maximal wave speed
  $\lambda_{\max}(\bn,\bu_L,\bu_R)$ for general hyperbolic systems
  typically requires solving a nonlinear scalar fixed point
  problem. Computing an upper bound on
  $\lambda_{\max}(\bn,\bu_L,\bu_R)$ is somewhat simpler as it
  requires to use iterative techniques that converge from
  above. Very accurate upper bounds are usually obtained in two to
  three iterations. The time spent to this task is in general
  negligible. For instance, the reader is referred to
  \citep{Guermond_Popov_Fast_Riemann_2016} where guaranteed upper
  bounds on $\lambda_{\max}(\bn,\bu_L,\bu_R)$ are given for the
  Euler equations with the co-volume equation of state (the source
  code for this method is available in the appendix of
  \citep{Guermond_Popov_Fast_Riemann_2016} and a source code
  computing $\lambda_{\max}(\bn,\bu_L,\bu_R)$ for a general equation
  of state is available at
  \cite{Clayton_Guermond_Popov_2021}). Computing
  $\lambda\upgrdy(\bn,\bu_L,\bu_R)$ or an upper bound thereof is
  similar to estimating upper bounds for
  $\lambda_{\max}(\bn,\bu_L,\bu_R)$. This can be easily done by using
  iterative techniques converging from above.
\end{remark}
%

\section{Scalar conservation equations}
\label{Sec:Scalar_equations}
In this section we specialize the proposed definitions
\eqref{def_of_greedy_viscosity}-\eqref{def_dij_greedy} on scalar
conservation equations. Instead of using the notation
$\polf$ and $\bu$, we now denote the flux by $\bef$ and the
dependent variable by $u$.

\subsection{Maximum principle}\label{Sec:Maximum_principle}
In the scalar case, the only invariant-domain property there is reduces to
enforcing the maximum principle. We start by estimating a wave speed that
does exactly that by following the algorithm \eqref{Abstract_Psil} described in
\S\ref{Sec:def_greedy_wave_speed}. We take care of the entropy
inequalities~\eqref{Abstract_etae} in
\S\ref{Sec:entropy_ineq_lambda_scalar}

Let $ u_L, u_R\in \calA$ and let $\bn$ be a unit vector in
$\Real^d$. (Computing $\lambda_{\max}(\bn, u_L, u_R)$ is a standard exercise; see
\eg \cite[Lem.\,3.1]{Dafermos_1972}, \cite[\S\,2.2]{Holden_Risebro_2015},
\cite[Thm.\,1]{Osher_1983}.) We introduce two concave functionals to take
care of the local minimum and maximum principle:
\begin{subequations}
  \begin{align}
    u_{LR}^{\min}&\eqq \min( u_L, u_R),
    && u_{LR}^{\max}\eqq \max( u_L, u_R),
    \\
    \Psi_1(u)&\eqq u - u_{LR}^{\min},
    && \Psi_2(u)\eqq u_{LR}^{\max}-u.
  \end{align}
\end{subequations}
Accordingly, we set $\calB_0\eqq \Real$, $\calB_1\eqq \{ u\in\calB_0\st
\Psi_1(u)\ge 0\}$, $\calB_2\eqq \{ u\in\calB_1\st \Psi_2(u)\ge 0\}$.
\begin{lemma}
  \label{Lem:Scalar_minmax_Roe_speed}
  Let
  \begin{equation}
    \label{def_lambda_12}
    \lambda_{12}(\bn, u_L, u_R):=
    \begin{cases}
      \frac{|(\bef( u_R) - \bef( u_L))\SCAL \bn|}{| u_R- u_L|} & \text{if $ u_R\ne u_L$}
      \\
      \max(|\bef'( u_R)\SCAL \bn|,|\bef'( u_L)\SCAL \bn|) & \text{if $ u_R= u_L$}.
    \end{cases}
  \end{equation}
  Then $\Psi_1(\overline{u}_{LR}(\lambda))\ge 0$ and
  $\Psi_2(\overline{u}_{LR}(\lambda))\ge 0$ for all $\lambda\ge
  \max(\lambda_{12},\lambda_\epsilon)$. (This also means
  $\overline{u}_{LR}(\lambda)\in [u_{LR}^{\min},u_{LR}^{\max}]$ for all
  $\lambda\ge \max(\lambda_{12},\lambda_\epsilon)$.)
\end{lemma}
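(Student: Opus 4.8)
The plan is to show that for every $\lambda\ge\max(\lambda_{12},\lambda_\epsilon)$ the auxiliary state $\overline u_{LR}(\lambda)$ (the scalar instance of \eqref{def_generic_barstates}) is a \emph{convex combination} of $u_L$ and $u_R$. Once this is established, membership in the interval $[u_{LR}^{\min},u_{LR}^{\max}]$, and hence the two inequalities $\Psi_1(\overline u_{LR}(\lambda))\ge 0$ and $\Psi_2(\overline u_{LR}(\lambda))\ge 0$, are immediate from the definitions of $\Psi_1,\Psi_2$ and of $u_{LR}^{\min}=\min(u_L,u_R)$, $u_{LR}^{\max}=\max(u_L,u_R)$.

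First I would dispose of the degenerate case $u_R=u_L$. Here $\bef(u_R)-\bef(u_L)=\bzero$, so $\overline u_{LR}(\lambda)=\tfrac12(u_L+u_R)=u_L$ for every $\lambda\ge\lambda_\epsilon>0$, and since $u_{LR}^{\min}=u_{LR}^{\max}=u_L$ there is nothing more to prove.

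For the main case $u_R\ne u_L$, I would introduce the signed (Roe-type) speed $a:=\dfrac{(\bef(u_R)-\bef(u_L))\SCAL\bn}{u_R-u_L}$, so that $|a|=\lambda_{12}(\bn,u_L,u_R)$ by \eqref{def_lambda_12}. Substituting $(\bef(u_R)-\bef(u_L))\SCAL\bn=a(u_R-u_L)$ into the definition of $\overline u_{LR}(\lambda)$ and collecting the coefficients of $u_L$ and $u_R$ yields
\begin{equation*}
  \overline u_{LR}(\lambda)=\tfrac12\Big(1+\tfrac{a}{\lambda}\Big)u_L+\tfrac12\Big(1-\tfrac{a}{\lambda}\Big)u_R ,
\end{equation*}
a combination of $u_L$ and $u_R$ whose two weights add up to one. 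Because $\lambda\ge\max(\lambda_{12},\lambda_\epsilon)$ we have $\lambda>0$ and $\lambda\ge|a|$, hence $|a/\lambda|\le 1$ and both weights $\tfrac12(1\pm a/\lambda)$ lie in $[0,1]$. Therefore $\overline u_{LR}(\lambda)$ is a genuine convex combination of $u_L$ and $u_R$, so $\overline u_{LR}(\lambda)\in[\min(u_L,u_R),\max(u_L,u_R)]=[u_{LR}^{\min},u_{LR}^{\max}]$, which is exactly $\Psi_1(\overline u_{LR}(\lambda))\ge 0$ and $\Psi_2(\overline u_{LR}(\lambda))\ge 0$.

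I do not expect any real obstacle: the core of the argument is a one-line convexity computation. The only points requiring a little care are the sign/absolute-value bookkeeping in the two branches of \eqref{def_lambda_12} (the branch $u_R=u_L$ never enters the estimate, since the corresponding bar state degenerates to $u_L$), and the observation that the $\max$ with $\lambda_\epsilon$ is needed solely to keep $\lambda$—and hence the denominator $2\lambda$ in the definition of $\overline u_{LR}(\lambda)$—strictly positive in the borderline situation where $\bef(u_R)\SCAL\bn=\bef(u_L)\SCAL\bn$ while $u_R\ne u_L$.
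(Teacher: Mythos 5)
Your proof is correct and is essentially the paper's argument in a slightly different packaging: the paper bounds $|b|\lambda^{-1}\le\frac12(u_{LR}^{\max}-u_{LR}^{\min})$ with $b=(\bef(u_R)-\bef(u_L))\SCAL\frac{\bn}{2}$, while you express $\overline u_{LR}(\lambda)$ as the convex combination $\tfrac12(1+\tfrac{a}{\lambda})u_L+\tfrac12(1-\tfrac{a}{\lambda})u_R$ with the Roe speed $a$ — both reduce to the same condition $\lambda\ge\lambda_{12}$. Your handling of the degenerate case $u_R=u_L$ and of the role of $\lambda_\epsilon$ is also consistent with the paper.
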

\begin{proof}
  Let $a:=\frac{1}{2}( u_L +  u_R)$,
  $b:=(\bef(u_R) - \bef(u_L))\SCAL \frac{\bn}{2}$. Note that
  $a\in [u_{LR}^{\min},u_{LR}^{\max}]$ and recall that
  $\overline{u}_{LR}(\lambda) \eqq a- b\lambda^{-1}$, for all $\lambda\ge \lambda_\epsilon>0$. We want to
  estimate the smallest value of $\lambda$ in $[\lambda_\epsilon,\lambda_{LR}^\sharp]$ so that
  $\Psi_1(\overline{u}_{LR}(\lambda)) \ge 0$ and $\Psi_2(\overline{u}_{LR}(\lambda)) \ge 0$.
  That is, we want $\lambda$ to be such that
  \begin{align*}
    -\frac12 (u_{LR}^{\max}-u_{LR}^{\min}) =
    a-u_{LR}^{\max} \le b \lambda^{-1} \le  a-u_{LR}^{\min} =
    \frac12 (u_{LR}^{\max}-u_{LR}^{\min}).
  \end{align*}
  This holds true if and only if $|b|\lambda^{-1} \le \frac12
  (u_{LR}^{\max}-u_{LR}^{\min})$. If $u_{LR}^{\max}-u_{LR}^{\min}\ne 0$,
  the smallest possible value of $\lambda$ making this inequality to hold
  is $\lambda=\frac{|(\bef( u_R) - \bef(
  u_L))\SCAL\bn|}{u_{LR}^{\max}-u_{LR}^{\min}}$.
  If $u_{LR}^{\max}-u_{LR}^{\min}= 0$, every value of $\lambda$ is
  admissible, but the only value of $\lambda$ that is stable under
  perturbation of the two states is $\lambda=|\bef'( u_R)\SCAL \bn|$
  if $\bef$ is of class $C^1$, and $\max(|\bef'( u_R)\SCAL
  \bn|,|\bef'( u_L)\SCAL \bn|)$ otherwise.
\end{proof}

We note that the wave speed identified in
Lemma~\ref{Lem:Scalar_minmax_Roe_speed}, $\frac{|(\bef( u_R) - \bef(
u_L))\bn|}{| u_R- u_L|}$, is the average speed, sometimes called Roe's
average in the computational fluid dynamics literature.
As the final wave speed defining the artificial viscosity is
eventually larger than or equal to this quantity, Lemma~2 from
\cite{Harten_JCP_1983} implies that the scheme is \emph{total variation non
increasing} in one space dimension on the three point stencil (the wave
speed $\lambda_{12}$ also satisfies the necessary and sufficient condition
formulated in \cite[Cor.~2.3]{Tadmor_Math_Com_1984}). It is well known
that in the presence of sonic points this wave speed is not large enough to
ensure that the approximation defined in \eqref{low_order_scheme} converges
to the entropy solution (see, \eg \cite[App.~ A]{Harten_Hyman_1983} or
\citep[Lem.\,3.2]{Guermond_Popov_SINUM_2017} for a simple proof). This
problem is addressed in the next section by augmenting the wave speed so as
to make sure that some entropy inequalities are locally satisfied, \ie
\eqref{Abstract_etae} is satisfied.

\subsection{Entropy inequality}
\label{Sec:entropy_ineq_lambda_scalar}
Now, following algorithm \eqref{Abstract_etae} described in
\S\ref{Sec:def_greedy_wave_speed}, we further look for a wave speed,
possibly larger than $\lambda_{12}$, so as to satisfy some entropy
inequalities.
\begin{lemma}
  \label{Lem:Scalar_entropy_speed}
  Let $k \in \Real$. Let $\eta_k(u) := |u - k|$ be the Kr\v{u}zkov
  entropy associated with $k$ and $\bq_k(u) := \sign(u -k)
  (\bef(u) -\bef(k))$ be the corresponding entropy flux. Let
  \begin{align*}
    &a_k:= u_L+ u_R - 2k, & & b:=(\bef( u_R)-\bef( u_L))\SCAL \bn,
    \\
    &c_k:=\eta_k( u_L) + \eta_k( u_R), & &d_k:=(\bq_k( u_R)-\bq_k( u_L))\SCAL \bn .
  \end{align*}
  (Observe that $|a_k|=c_k$ if and only if
  $k\not\in(u_{LR}^{\min},u_{LR}^{\max})$.) Let
  $\lambda_{12}(\bn, u_L, u_R)$ be defined as in
  Lemma~\ref{Lem:Scalar_minmax_Roe_speed}, and let
  \begin{equation}
    \label{Lambda_Scalar_Entrop_Ineq}
    \lambda(k,\bn, u_L, u_R)\eqq\begin{cases}
    \lambda_{12}(\bn, u_L, u_R)& \text{if $k\not\in (u_{LR}^{\min},u_{LR}^{\max})$}
    \\
    \max\left( \frac{d_k+b}{c_k+a_k},\frac{d_k-b}{c_k-a_k},
    \lambda_{12}(\bn, u_L, u_R) \right) & \text{otherwise}.
    \end{cases}\hspace{-.2cm}
  \end{equation}
  Let $\Phi_k(\frac{1}{\lambda})\eqq \eta_k(\overline{u}_{LR}(\lambda))
  - \frac12(\eta_k( u_L) + \eta_k( u_R)) +
  \frac{1}{2\lambda}(\bq_k( u_R)-\bq_k( u_L))\SCAL \bn$. Then, for
  every $\lambda\ge \max(\lambda(k,\bn, u_L, u_R),\lambda_\epsilon)$ we
  have $\Phi_k(\frac{1}{\lambda})\le 0$ .
\end{lemma}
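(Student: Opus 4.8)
The plan is to turn the entropy inequality $\Phi_k(\tfrac1\lambda)\le 0$ into a pair of affine inequalities in $\tfrac1\lambda$, and then split according to whether $k$ lies strictly between $u_L$ and $u_R$ or not. First I would substitute $\overline{u}_{LR}(\lambda)=\tfrac12(u_L+u_R)-\tfrac{b}{2\lambda}$ together with the identity $\tfrac12(u_L+u_R)-k=\tfrac12 a_k$ into the definition of $\Phi_k$, which gives
\begin{equation*}
  \Phi_k(\tfrac1\lambda)=\tfrac12\Big(\big|a_k-\tfrac{b}{\lambda}\big|-c_k+\tfrac{d_k}{\lambda}\Big).
\end{equation*}
Thus $\Phi_k(\tfrac1\lambda)\le 0$ is equivalent to $|a_k-\tfrac{b}{\lambda}|\le c_k-\tfrac{d_k}{\lambda}$, which, multiplying through by $\lambda>0$, is equivalent to the conjunction of $b+d_k\le\lambda(a_k+c_k)$ and $d_k-b\le\lambda(c_k-a_k)$; the sign constraint $c_k-\tfrac{d_k}{\lambda}\ge 0$ required for the absolute-value bound to be feasible is automatically obtained by adding these two inequalities.

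Next I would treat the two cases of \eqref{Lambda_Scalar_Entrop_Ineq}. When $k\in(u_{LR}^{\min},u_{LR}^{\max})$, the strict triangle inequality gives $|a_k|<c_k$, so $a_k+c_k>0$ and $c_k-a_k>0$; dividing the two inequalities above by these positive numbers shows they hold exactly when $\lambda\ge\tfrac{b+d_k}{a_k+c_k}$ and $\lambda\ge\tfrac{d_k-b}{c_k-a_k}$, and both are implied by $\lambda\ge\lambda(k,\bn,u_L,u_R)$, which is the maximum of (among other things) precisely these two ratios. When $k\notin(u_{LR}^{\min},u_{LR}^{\max})$, the prescribed speed is $\lambda_{12}$, so $\lambda\ge\max(\lambda_{12},\lambda_\epsilon)$; I would then invoke Lemma~\ref{Lem:Scalar_minmax_Roe_speed}, which places $\overline{u}_{LR}(\lambda)$ in $[u_{LR}^{\min},u_{LR}^{\max}]$, so that $\overline{u}_{LR}(\lambda)-k$, $u_L-k$ and $u_R-k$ all carry the same sign $\sigma\in\{-1,+1\}$ (or vanish). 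On that side of $k$ one has $\eta_k(u)=\sigma(u-k)$, and $\bq_k(u_L)=\sigma(\bef(u_L)-\bef(k))\SCAL\bn$, $\bq_k(u_R)=\sigma(\bef(u_R)-\bef(k))\SCAL\bn$; substituting these into $\Phi_k$ every term telescopes and one obtains $\Phi_k(\tfrac1\lambda)=0$, which is of course $\le 0$.

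The calculation is elementary, so the only real obstacle is the sign bookkeeping. The key realization in the second case is that one should not attempt a quantitative estimate at all: the maximum-principle speed $\lambda_{12}$ already traps $\overline{u}_{LR}(\lambda)$ inside $[u_{LR}^{\min},u_{LR}^{\max}]$, and this alone pins the sign of $(\cdot-k)$ and forces the entropy residual to vanish identically. I would also remark that the degenerate configurations — $k$ equal to $u_L$ or $u_R$, or $\overline{u}_{LR}(\lambda)=k$ — cause no trouble, since $\eta_k$ and $\bq_k$ both vanish at $k$, consistently with the affine formula $\sigma(\cdot-k)$; and that the ratios $\tfrac{b+d_k}{a_k+c_k}$, $\tfrac{d_k-b}{c_k-a_k}$ may a priori be nonpositive, which is harmless because the prescribed speed also contains $\lambda_{12}$ and, through the hypothesis $\lambda\ge\lambda_\epsilon$, a strictly positive floor, so that $\lambda>0$ and all the expressions above remain well defined.
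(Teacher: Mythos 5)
Your proposal is correct and follows essentially the same route as the paper's proof: for $k\notin(u_{LR}^{\min},u_{LR}^{\max})$ you use the maximum principle from Lemma~\ref{Lem:Scalar_minmax_Roe_speed} to fix the sign of $\overline{u}_{LR}(\lambda)-k$, $u_L-k$, $u_R-k$ and conclude $\Phi_k=0$ by telescoping, and for $k\in(u_{LR}^{\min},u_{LR}^{\max})$ you reduce $|a_k-b/\lambda|\le c_k-d_k/\lambda$ to the two affine conditions $\lambda\ge\frac{d_k+b}{c_k+a_k}$, $\lambda\ge\frac{d_k-b}{c_k-a_k}$ using $|a_k|<c_k$, exactly as in the paper. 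The only additions are cosmetic (the explicit remark that the feasibility condition $c_k-d_k/\lambda\ge 0$ follows automatically, and the treatment of degenerate sign cases), which the paper leaves implicit.
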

\begin{proof}
  (1) Assume first that $k\not\in (u_{LR}^{\min},u_{LR}^{\max})$, \ie
  $c_k = |a_k|$. The assumption $\lambda\ge
  \max(\lambda_{12},\lambda_\epsilon)$ implies that
  $\overline u_{LR}(\lambda) \in [u_{LR}^{\min}, u_{LR}^{\max}]$.
  Hence, $\sign(\overline u_{LR}(\lambda)-k) = \sign(\tfrac12 ( u_R+ u_L)
  -k)$. As a result, we have
  \begin{align*}
    \eta_k(\overline{u}_{LR}(\lambda)) & = \sign(\overline u_{LR}(\lambda) -k)
    (\overline u_{LR}(\lambda) -k)
    \\
     & = \sign(\tfrac12 ( u_R+ u_L) -k) \left(\tfrac12 ( u_R+ u_L)
    -\tfrac{1}{2 \lambda} (\bef( u_R) -\bef( u_L))\SCAL \bn - k\right).
  \end{align*}
  On the other hand, using that $\eta_k( u_R)=\sign(\tfrac12
  ( u_R+ u_L) -k) ( u_R -k)$, $\bq_k( u_R) =\sign(\tfrac12
  ( u_R+ u_L) -k)(\bef( u_R) - \bef(k))$, and the corresponding
  identities for $\eta_k( u_L)$ and $\bq_k( u_L)$, we deduce that
  \begin{align*}
    \tfrac12 \eta_k( u_L) &+ \tfrac12 \eta_k( u_R)
    - \tfrac{1}{2\lambda} (\bq_k( u_R)-\bq_k( u_L))\SCAL \bn  \\
    & = \sign(\tfrac12 ( u_R+ u_L) -k) \left(\tfrac12 ( u_R+ u_L)
    -\tfrac{1}{2\lambda}(\bef( u_R) -\bef( u_L))\SCAL \bn - k\right).
  \end{align*}
  Hence, we conclude that $\eta_k(\overline u_{LR}(\lambda))=
  \eta_k( u_L) + \eta_k( u_R) - \lambda^{-1}
  (\bq_k( u_R)-\bq_k( u_L))\SCAL \bn $ for all $k\not\in
  (u_{LR}^{\min},u_{LR}^{\max})$.

  (2) Let us now assume that $k \in (u_{LR}^{\min},u_{LR}^{\max})$.
  Then we have that $c_k-|a_k|\ge 2 \min(\eta_k( u_L),\eta_k( u_R))>0$.
  Hence definition \eqref{Lambda_Scalar_Entrop_Ineq} makes sense. Using the
  definitions for $a_k$, $b$, $c_k$, and $d_k$, we have $2
  \eta_k(\overline{u}_{LR}(\lambda)) = |a_k - \lambda^{-1} b|$. Then we want
  to find the smallest value of $\lambda$ that guarantees that
  \begin{align*}
    |a_k - \lambda^{-1}b| \le c_k - \lambda^{-1} d_k.
  \end{align*}
  The above inequality is equivalent to
  \begin{align*}
    \lambda^{-1}(d_k-b) \le c_k-a_k, \quad\text{and}\quad
    \lambda^{-1}(b+d_k)  \le c_k + a_k.
  \end{align*}
  Using that $|a_k|<c_k$, we infer that
  \begin{align*}
    \lambda \ge \frac{d_k-b}{c_k-a_k}; \qquad \lambda \ge \frac{d_k+b}{c_k+a_k}.
  \end{align*}
  The assertion follows readily.
\end{proof}

\subsection{Summary}
The following result summarizes what is proposed above.
In particular, it shows how the Kr\v{u}zkov entropies should be chosen.
\begin{theorem}
  \label{Thm:UL_is_invariant_modified}
  Let $n\ge 0$, $i\in\calV$,
  $\sfU_i^{\min,n} \eqq\min_{j\in\calI(i)}\sfU_j^n$,
  $\sfU_i^{\max,n} \eqq\max_{j\in\calI(i)}\sfU_j^n$.
  Let $k_i$ be any real number in the range
  $(\sfU_i^{\min,n},\sfU_i^{\max,n})$. Let $(\eta_{k_i},\bq_{k_i})$ be the
  associated Kr\v{u}zkov entropy pair. For all $j\in\calI(i)^*$, let
  $\lambda_{ij}\upgrdyn\eqq \max(\lambda_\epsilon,
  \lambda(k_i,\bn_{ij},\sfU_i^m,\sfU_j^n))$ and
  \begin{equation}
   d_{ij}\upgrdyn :=
    \max(\lambda_{ij}\upgrdyn\|\bc_{ij}\|_{\ell^2},
    \lambda_{ji}\upgrdyn\|\bc_{ji}\|_{\ell^2}).
  \end{equation}
  Let $\sfU_i\upnp$ be given by \eqref{low_order_scheme} with the
  viscosity $d_{ij}^n=d_{ij}\upgrdyn$ defined above. Assume that $1-2\frac{\dt}{m_i}\sum_{j\in\calI(i)^*}d_{ij}^n\ge 0$. Then
  \begin{align}
    &\sfU_i\upnp \in [\sfU_i^{\min,n},\sfU_i^{\max,n}]
    \label{eq1:Thm:UL_is_invariant_modified}
    \\
    &\frac{m_i}{\dt} (\eta_{k_i}(\sfU_i\upnp) - \eta_{k_i}(\sfU_i^n))
    + \sum_{j\in\calI(i)} \bc_{ij}\SCAL \bq_{k_i}(\sfU_j^n)
    \label{eq2:Thm:UL_is_invariant_modified}
    \\
    &\hspace{3cm}- \sum_{j\in \calI(i)^*} \!\!\! d_{ij}^n
    (\eta_{k_i}(\sfU_{j}^{n})  - \eta_{k_i}(\sfU_{j}^{n})) \le 0.
    \nonumber
  \end{align}
\end{theorem}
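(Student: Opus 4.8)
The plan is to reproduce, in the scalar setting, the convex-combination argument used in the proof of Theorem~\ref{Thm:UL_is_invariant} (one could instead verify the hypotheses of Theorem~\ref{Thm:UL_is_invariant_greedy} with $\calB_1^{ij}=[\min(\sfU_i^n,\sfU_j^n),\infty)$, $\calB_2^{ij}=[\min(\sfU_i^n,\sfU_j^n),\max(\sfU_i^n,\sfU_j^n)]$, $\Psi_1^{ij}(u)=u-\min(\sfU_i^n,\sfU_j^n)$, $\Psi_2^{ij}(u)=\max(\sfU_i^n,\sfU_j^n)-u$, and $\calE^i=\{k_i\}$, but the direct route is just as short). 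The two analytic inputs are Lemma~\ref{Lem:Scalar_minmax_Roe_speed} (the maximum principle on the auxiliary bar states) and Lemma~\ref{Lem:Scalar_entropy_speed} (the Kr\v{u}zkov entropy inequality on the auxiliary bar states); the rest is algebra.

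First I would check that the graph viscosity prescribed in the statement is large enough for both lemmas to apply to every auxiliary state $\overline{\sfU}_{ij}^n\eqq\overline{u}_{ij}(\lambda_{ij})$, $\lambda_{ij}\eqq d_{ij}^n/\|\bc_{ij}\|_{\ell^2}$. Since $\|\bc_{ij}\|_{\ell^2}=\|\bc_{ji}\|_{\ell^2}$ (from $\bc_{ij}=-\bc_{ji}$), the definition of $d_{ij}^n=d_{ij}\upgrdyn$ gives $\lambda_{ij}=\max(\lambda_{ij}\upgrdyn,\lambda_{ji}\upgrdyn)\ge\lambda_{ij}\upgrdyn=\max(\lambda_\epsilon,\lambda(k_i,\bn_{ij},\sfU_i^n,\sfU_j^n))$; and since $\lambda(k,\bn,u_L,u_R)\ge\lambda_{12}(\bn,u_L,u_R)$ by~\eqref{Lambda_Scalar_Entrop_Ineq}, also $\lambda_{ij}\ge\max(\lambda_\epsilon,\lambda_{12}(\bn_{ij},\sfU_i^n,\sfU_j^n))$. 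Hence Lemma~\ref{Lem:Scalar_minmax_Roe_speed} gives $\overline{\sfU}_{ij}^n\in[\min(\sfU_i^n,\sfU_j^n),\max(\sfU_i^n,\sfU_j^n)]\subset[\sfU_i^{\min,n},\sfU_i^{\max,n}]$, and Lemma~\ref{Lem:Scalar_entropy_speed} applied with $k=k_i$, $u_L=\sfU_i^n$, $u_R=\sfU_j^n$, $\bn=\bn_{ij}$ gives $\Phi_{k_i}(\lambda_{ij}^{-1})\le 0$; multiplying the latter by $2d_{ij}^n=2\lambda_{ij}\|\bc_{ij}\|_{\ell^2}$ and using $\|\bc_{ij}\|_{\ell^2}\bn_{ij}=\bc_{ij}$ turns it into $2d_{ij}^n\eta_{k_i}(\overline{\sfU}_{ij}^n)\le d_{ij}^n(\eta_{k_i}(\sfU_i^n)+\eta_{k_i}(\sfU_j^n))-(\bq_{k_i}(\sfU_j^n)-\bq_{k_i}(\sfU_i^n))\SCAL\bc_{ij}$.

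Then I would close exactly as in the proof of Theorem~\ref{Thm:UL_is_invariant}. Using $\sum_{j\in\calI(i)}\bc_{ij}=\bzero$, rewrite~\eqref{low_order_scheme} as $\sfU_i\upnp=\big(1-\sum_{j\in\calI(i)^*}\tfrac{2\dt d_{ij}^n}{m_i}\big)\sfU_i^n+\sum_{j\in\calI(i)^*}\tfrac{2\dt d_{ij}^n}{m_i}\overline{\sfU}_{ij}^n$; the hypothesis $1-2\tfrac{\dt}{m_i}\sum_{j\in\calI(i)^*}d_{ij}^n\ge 0$ makes this a convex combination of $\sfU_i^n$ and the $\overline{\sfU}_{ij}^n$, all of which lie in the interval $[\sfU_i^{\min,n},\sfU_i^{\max,n}]$, which yields~\eqref{eq1:Thm:UL_is_invariant_modified}. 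For~\eqref{eq2:Thm:UL_is_invariant_modified}, apply convexity of $\eta_{k_i}$ to the same convex combination, substitute the bound on $2d_{ij}^n\eta_{k_i}(\overline{\sfU}_{ij}^n)$ obtained above, cancel the $\eta_{k_i}(\sfU_i^n)$ contributions, and use $\sum_{j\in\calI(i)}\bc_{ij}=\bzero$ once more to rewrite $\sum_{j\in\calI(i)^*}(\bq_{k_i}(\sfU_j^n)-\bq_{k_i}(\sfU_i^n))\SCAL\bc_{ij}$ as $\sum_{j\in\calI(i)}\bc_{ij}\SCAL\bq_{k_i}(\sfU_j^n)$; multiplying by $m_i/\dt$ gives~\eqref{eq2:Thm:UL_is_invariant_modified} (with the evident correction that its last sum should read $d_{ij}^n(\eta_{k_i}(\sfU_j^n)-\eta_{k_i}(\sfU_i^n))$).

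There is no genuine obstacle: the analysis is entirely contained in Lemmas~\ref{Lem:Scalar_minmax_Roe_speed} and~\ref{Lem:Scalar_entropy_speed}, and the remainder is the algebra of Theorem~\ref{Thm:UL_is_invariant}. The points needing a little care are that each pairwise interval $[\min(\sfU_i^n,\sfU_j^n),\max(\sfU_i^n,\sfU_j^n)]$ sits inside the common stencil interval $[\sfU_i^{\min,n},\sfU_i^{\max,n}]$ (so all bar states share one convex set), and the bookkeeping $\lambda_{ij}=d_{ij}^n/\|\bc_{ij}\|_{\ell^2}$, $\bc_{ij}=\|\bc_{ij}\|_{\ell^2}\bn_{ij}$ in the entropy-flux term. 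Finally, the hypothesis $k_i\in(\sfU_i^{\min,n},\sfU_i^{\max,n})$ plays no role in the derivation above (Lemma~\ref{Lem:Scalar_entropy_speed} holds for any $k\in\Real$); it merely makes the entropy inequality non-vacuous, since for $j$ with $k_i\notin(\min(\sfU_i^n,\sfU_j^n),\max(\sfU_i^n,\sfU_j^n))$ one falls into the first branch of~\eqref{Lambda_Scalar_Entrop_Ineq} and the entropy constraint contributes nothing beyond the maximum principle.
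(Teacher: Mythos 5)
Your proof is correct and is essentially the paper's own route: the paper disposes of this theorem by declaring it a reformulation of Theorem~\ref{Thm:UL_is_invariant_greedy}, whose proof is exactly the convex-combination computation you reproduce, with Lemma~\ref{Lem:Scalar_minmax_Roe_speed} and Lemma~\ref{Lem:Scalar_entropy_speed} supplying the bar-state maximum principle and entropy inequality that the abstract framework requires. Your bookkeeping (that $\lambda(k,\cdot)\ge\lambda_{12}$, that $\|\bc_{ij}\|_{\ell^2}=\|\bc_{ji}\|_{\ell^2}$, and that the last sum in \eqref{eq2:Thm:UL_is_invariant_modified} should read $d_{ij}^n(\eta_{k_i}(\sfU_j^n)-\eta_{k_i}(\sfU_i^n))$, a typo carried over from Theorems~\ref{Thm:UL_is_invariant} and~\ref{Thm:UL_is_invariant_greedy}) is accurate, as is your remark that $k_i\in(\sfU_i^{\min,n},\sfU_i^{\max,n})$ is only needed to make the entropy inequality non-vacuous.
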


\begin{proof}
  This is just a reformulation of
  Theorem~\ref{Thm:UL_is_invariant_greedy} .
\end{proof}

\begin{remark}[Entropy choice]
  It is essential that $k_i$ be chosen in
  $(\sfU_i^{\min,n},\sfU_i^{\max,n})$; otherwise, we have
  $\lambda_{ij}\upgrdyn\eqq \max(\lambda_\epsilon,
  \lambda_{12}(\bn_{ij},\sfU_i^m,\sfU_j^n)$, and
  inequality~\eqref{eq2:Thm:UL_is_invariant_modified} is just a
  restatement of the local maximum principle (\ie $\sfU_i\upnp \in
  [\sfU^{\min,n},\sfU^{\max,n}]$). It is also demonstrated in the
  numerical section that the choice of  $k_i$ in
  $(\sfU_i^{\min,n},\sfU_i^{\max,n})$ should be random for the method to
  be robust when the flux $\bef$ is not strictly convex or concave.
\end{remark}


\section{The $p$-system}
\label{Sec:p_system}

In this section we illustrate the greedy viscosity idea on the
one-dimensional $p$-system. The extension to the compressible Euler
equations with arbitrary equation of state will be done in the
forthcoming second part of this work.

\subsection{The model problem}
The $p$-system is a model for isentropic gas dynamics written in Lagrangian
coordinates. The dependent variable has two components which are the
specific volume, $v$, and the velocity, $u$. The system is written as
follows:
\begin{equation}
  \label{def:p_system}
  \partial_t \begin{pmatrix}v\\ u\end{pmatrix} +
\partial_x\begin{pmatrix}-u\\ p(v)\end{pmatrix}=0,\qquad (x,t)\in \Real\CROSS\Real_+.
\end{equation}
The pressure $v\mapsto p(v)$ is assumed to be of class $C^2(\Real_+;\Real)$ and be such that
\begin{equation}
 \label{def:pressure}
p'<0, \qquad 0<p''.
\end{equation}
As an illustration, we are going to restrict the discussion to the
gamma-law, $p(v)=r v^{-\gamma}$, where $r>0$ and $\gamma > 1$.
We introduce the notation $\bu\eqq (v,u)\tr$ and define the flux $\polf(\bu) \eqq (-u,p(v))\tr$.

The admissible set for \eqref{def:p_system} is $\calA\eqq (0,\infty)\CROSS \Real$.  The p-system ($\gamma > 1$) has
two families of global Riemann invariants:
\begin{equation} \label{def:Riemann_inv}
  w_+(\bu)=u+\int_v^\infty\!\!\!\! \sqrt{-p'(\xi)}\diff \xi, \quad\mbox{and}\quad
  w_-(\bu)=u-\int_v^\infty\!\!\!\! \sqrt{-p'(\xi)}\diff \xi,
\end{equation}
and it can be shown that
\begin{equation}\label{speed_upper_bound_p_system}
\calB_{ab} := \{\bu\in \calA \st a \le w_-(\bu),
\ w_+(\bu) \le b\}
\end{equation}
is an invariant domain for the system \eqref{def:p_system} for all
$a<b\in\Real$; see \cite[Exp.~3.5, p.~597]{Hoff_1985} for a proof in
the context of parabolic regularization, or
\cite{Young_2002} for a direct proof. Note in passing that it is established in
\cite[Thm.\,2.1]{Hoff_1979} and \citep[Thm.\,4.1]{Hoff_1985} that the Lax
scheme is invariant-domain preserving for all $\calB_{ab}$.

The $p$-system has many entropy pairs. We are going to use the following
one:
\begin{equation}\label{p_system_entropy}
  \eta(\bu) = \frac12u^2 +\int_v^\infty p(\xi) \diff \xi;\qquad
  q(\bu) = u p(v).
\end{equation}
We now follow the principles explained in Algorithm~\ref{Greedy_algorithm}
to estimate a greedy viscosity.

\subsection{Maximum wave speed}
Let us consider a left state $\bu_i\eqq (v_i,u_i)\tr$, a right state
$\bu_j\eqq (v_j,u_j)\tr$, and a one-dimensional normal direction
$n_{ij}\in\{-1,+1\}$ where $i\in\calV$ and $j\in\calI(i)$. We now describe
a procedure to compute (an upper bound of) the maximal wave speed
$\lambda_{\max}(n_{ij},\bu_i,\bu_j)$ that was introduced in
\eqref{eq:lambda_max} in \S\ref{Sec:hyperbolic_system}. One first realizes
that the Riemann problem with the flux $\polf(\bu)n_{ij}$, left data data
$(v_i,u_i)\tr$ and right data $(v_j,u_j)\tr$, is identical to the Riemann
problem with the flux $\polf(\bu)$ and data $\bu_L\eqq (v_i,n_{ij}u_i)\tr$,
$\bu_R\eqq (v_j,n_{ij}u_j)\tr$. We now use the symbol $n$ in lieu of
$n_{ij}$ and write $\lambda_{\max}(n,\bu_L,\bu_R)$ instead of
$\lambda_{\max}(n_{ij},\bu_i,\bu_j)$.

For the index $Z\in \{L,R\}$, we introduce
\begin{equation}
  f_Z(v) :=
  \begin{cases}
    -\sqrt{(p(v)-p(v_Z))(v_Z-v)},& \text{if $v\le v_Z$}
      \\
    \displaystyle\int_{v_Z}^v \sqrt{-p'(\xi)}\diff \xi, & \text{if
    $v>v_Z$}.
  \end{cases}
\end{equation}
and define $\phi(v):=f_L(v)+f_R(v)+ u_L-u_R$. The function $\phi$ is
increasing and concave with $\lim_{v\to +0}\phi(v)=-\infty$; see
\cite{Young_2002} for details. Notice that
$\lim_{v\to+\infty}\phi(v) = w_+(\bu_L)-w_-(\bu_R)$. If
$w_+(\bu_L)-w_-(\bu_R)\le 0$, then we set $v^*\eqq +\infty$ (vacuum
appears in the Riemann solution in this case). If
$w_+(\bu_L)-w_-(\bu_R)\ge 0$, the equation $\phi(v)=0$ has a unique
solution which we denote by $v^*$. Setting $v_{\min}\eqq \min(v_L,v_R)$, we
have $\phi(v_{\min}) = u_L-u_R - \sqrt{(p(v_R)-p(v_L))(v_L-v_R)}$,
and the following result is standard (see \eg \citep{Young_2002},
\citep[Lem.\,2.5]{Guermond_Popov_SINUM_2016}):
\begin{equation} \label{lambda_max_p_system}
  \lambda_{\max}(n,\bu_L,\!\bu_R)\! = \!\!
  \begin{cases}
    \sqrt{\frac{p(v_{\min})-p(v^*)}{v^*-v_{\min}}},
    & \text{if $\phi(v_{\min})> 0$,}
    \\
    \sqrt{-p'(v_{\min})},
    &\text{otherwise,}
  \end{cases}
\end{equation}
Note that $\lambda_{\max}(n,\bu_L,\bu_R)$ is a decreasing function
of $v^*$. The value of $v^*$ can be found using Newton's method
starting with a guess $v^0$ smaller than $v^*$. As $\phi$ is concave
and increasing, starting the Newton iterations on the left of $v^*$
guarantees that at each step of Newton's method the new estimate is smaller
than $v^*$, which in turn implies that the estimated maximum speed is an
upper bound for the exact maximum speed. A starting guess $v^0$ with the
above property can be computed as follows:
\begin{subequations}
  \begin{align}
    \label{eq:def_wpm_p_system}
    w_+^{\max}& \eqq \max(w_+(\bu_L),w_+(\bu_R)),\qquad
    w_-^{\min} \eqq \min(w_-(\bu_L),w_-(\bu_R)) \\
    \label{eq:def_v_0_p_system}
    v^0&\eqq
    (\gamma r)^{\frac{1}{\gamma-1}}
    \left(\frac{4}{(\gamma-1)(w_+^{\max}-w_-^{\min})}\right)^{\frac{2}{\gamma-1}}.
  \end{align}
\end{subequations}
Here, \eqref{eq:def_v_0_p_system} follows from finding the pair
$u^0\eqq (v^0,u^0)\tr$ solving $w_+(\bu^0)=w_+^{\max}$ and
$w_-(\bu^0)=w_-^{\min}$. This construction implies
\begin{equation}\label{widehat_lambda_p_system}
  \lambda_{\max}(n,\bu_L,\!\bu_R)\le
\widehat\lambda_{\max}\eqq \sqrt{\frac{p(v_{\min})-p(v^0)}{v^0-v_{\min}}}.
  \end{equation}

\subsection{Invariant-domain property} \label{IDP_p_system}
We first compute three wave speeds to guarantee a local invariant-domain
property as in \eqref{Abstract_Psil}. Then we compute a fourth wave speed
in \S\ref{Sec:p_system_entropy_lambda} so as to ensure that a local entropy
inequality holds for the above-defined entropy pair;
see~\eqref{Abstract_etae}. Recall that
\begin{align}
  \overline{\bu}_{LR}(\lambda) =
  \frac12  \begin{pmatrix}
    v_L+v_R + \frac{1}{\lambda}(u_R-u_L) \\
    u_L + u_R  -  \frac{1}{\lambda}(p(v_R)-p(v_L))
  \end{pmatrix}.
\end{align}
We introduce
\begin{gather}
  \Psi_1(\bu) \eqq v,\qquad   \Psi_2(\bu) \eqq w_+^{\max} - w_+(\bu),\qquad
  \Psi_3(\bu) \eqq  w_-(\bu) - w_-^{\min},
\end{gather}
where $w_+^{\max}$ and $w_-^{\min}$ are defined in
\eqref{eq:def_wpm_p_system}.  Observe that $\Psi_1$ is concave and
$\Psi_2$ and $\Psi_3$ are both strictly concave due
to~\eqref{def:pressure}. We define $\calB_0\eqq \Real^2$,
$\calB_1\eqq \{\bu\in\Real^2\st \Psi_1(v)>0\} = \calA$,
$\calB_2 \eqq \{\bu \in\calB_1\st \Psi_2(\bu) \ge 0\}$, and
$\calB_3 \eqq \{\bu \in\calB_2\st \Psi_3(\bu) \ge 0\}$. It is
necessary to introduce $\Psi_1$ and $\calB_1=\calA$ to make sure that the
domain of definition of $\Psi_2$ and $\Psi_3$ is $\calA$.

If $\bu_L=\bu_R$, then  $\overline\bu_{LR}(\lambda) =
\bu_L=\bu_R$ for all $\lambda>0$. In this case, we take
$\lambda_1=\lambda_2=\lambda_3=\lambda_\epsilon$.
Let us now assume that  $\bu_L\not=\bu_R$.
The smallest wave speed $\lambda_1$, greater than or equal to $\lambda_\epsilon$,
that ensures $\Psi_1(\overline\bu_{LR}(\lambda))> 0$ for all
$\lambda> \lambda_1$ is given by
\begin{equation}
  \lambda_1 =\max\left(\frac{u_L-u_R}{v_L+v_R},\lambda_\epsilon\right).
\end{equation}
Now we estimate $\lambda_2$. If
$\Psi_2(\overline\bu_{LR}(\lambda_1))\ge 0$, then we set
$\lambda_2 = \lambda_1$. If $\Psi_2(\overline\bu_{LR}(\lambda_1))< 0$
there are two cases. If $u_R-u_L\ge 0$ and $-(p(v_R)-p(v_L))\le 0$, we
have
$\Psi_2(\overline\bu_{LR}(\lambda))\ge
\Psi_2(\frac12(\bu_{L}+\bu_R))\ge 0$ for all $\lambda>0$ and we set
$\lambda_2\eqq \lambda_1$. Otherwise, we observe that the curve
$w_+(\bu) = w_+^{\max}$ has a horizontal asymptote given by
$\{u= w_+^{\max}\}$ and a vertical asymptote given by $\{v=0\}$ and
the condition ($u_R-u_L< 0$ or $-(p(v_R)-p(v_L))> 0$) 
implies that the equation $\Psi_2(\overline\bu_{LR}(\lambda))= 0$ has
a unique positive solution, $\lambda_2^*$, which can be computed using
an iterative method, and we set $\lambda_2=\lambda_2^*$; we omit the
details for brevity. The argument to estimate $\lambda_3$ is
analogous: If $\Psi_3(\overline\bu_{LR}(\lambda_2)) \ge 0$, then we
set $\lambda_3 = \lambda_2$. Otherwise, we observe that the curve
$w_-(\bu) = w_-^{\min}$ has a horizontal asymptote given by
$\{u= w_-^{\min}\}$ and a vertical asymptote given by $\{v=0\}$. Hence
if $u_R-u_L\ge 0$ and $-(p(v_R)-p(v_L))\ge 0$, we have
$\Psi_3(\overline\bu_{LR}(\lambda)) \ge
\Psi_3(\frac12(\bu_{L}+\bu_R))> 0$ for all $\lambda>0$ and we set
$\lambda_3\eqq \lambda_2$. Otherwise the equation
$\Psi_3(\overline\bu_{LR}(\lambda))= 0$ has a unique positive
solution, $\lambda_3^*$, which can be computed using an iterative
method, and we set $\lambda_3=\lambda_3^*$. As asserted in
Lemma~\ref{Lem:def_lambdas}, the process described above guarantees
that
$\overline\bu_{LR}(\lambda) \in \calB_3\eqq \{\bu \in \calA\st
\Psi_2(\bu)\ge 0, \ \Psi_3(\bu)\ge 0 \}$ for all
$\lambda\ge \lambda_3$.

\subsection{Wave speed based on the entropy inequality}
\label{Sec:p_system_entropy_lambda}

We now estimate a wave speed associated with one entropy inequality.
The entropy functional in this case is
\begin{equation}
  \Phi_e(t) :=\eta\left(\overline\bu_{LR}(\tfrac{1}{t})\right)-
  \frac{1}{2}(\eta(\bu_L)+\eta(\bu_R)) + \frac{t}{2}(q(\bu_R) -
  q(\bu_L)),
  \label{def_Phi_p_system}
\end{equation}
where $\eta$ and $\bq$ are defined in \eqref{p_system_entropy}. We have
$\eta(\bu) = \frac12 u^2 -  \frac{1}{1-\gamma} v p(v)$ for the pressure
gamma-law.

If $\bu_L=\bu_R$, then  $\overline\bu_{LR}(\lambda) =
\bu_L=\bu_R$ for all $\lambda>0$ and $\Phi_e(t)=0$ for all $t\ge 0$. In
this case, we take $\lambda_e=\lambda_3$.
If $\bu_L\not=\bu_R$, we compute $\lambda_e$
as defined in \eqref{Abstract_etae}. More precisely,
if $\Phi_e(\frac{1}{\lambda_3})\le 0$, then we set
$\lambda_e=\lambda_3$. Otherwise,
we observe that the equation $\Phi_e(\frac{1}{\lambda})=0$ has a
unique solution in $[\frac{1}{\lambda_{LR}^\sharp},\frac{1}{\lambda_3})$  because $\eta$
defined in \eqref{p_system_entropy} is strictly convex and we also have
established in \eqref{Ass_Ulbd_max:2} that $\Phi_e(\frac{1}{\lambda_{LR}^\sharp})\le 0$.
Finally, we set $\lambda_e = \max(\lambda_e,\lambda_3).$
The greedy wave speed is obtained by setting
$\lambda\upgrdy(n,\bu_L,\bu_R)\eqq \lambda_e$. This
algorithm is illustrated numerically in
\S\ref{Sec:numerical_illustration_p_system}.


\section{Numerical illustrations with scalar conservation equations}
\label{Sec:Numerical_illustrations}

We start by illustrating the method for scalar conservation equations. To
test the robustness of the method, we choose problems with fluxes that are
not strictly convex and contain \emph{sonic points}. Methods that
underestimate the maximum wave speed (or just enforce the maximum
principle) tend to fail when applied to this type of problems.

Here, we numerically show that computing the viscosity so as to enforce
local entropy inequalities is sufficient to select the entropy solution
provided that the family of entropies is rich enough. All the computations
are done with continuous $\polP_1$ finite elements and we take
$\epsilon=10^{-8}$ in \eqref{def_lambda_0}. The time stepping is done with
the three stages, third-order, strong stability preserving Runge Kutta
method \citep{Shu_Osher1988}. The time step is computed by using the
expression $\dt_n = \frac{\text{\cfl}}{2}\max_{i\in\calV}
m_i/\sum_{j\in\calI(i)^*} d_{ij}\upgrdyn$.

\subsection{Piecewise linear flux}
\label{Sec:Piecewise_linear_flux}
We consider a Riemann problem in one space dimension for the scalar
conservation equation $\partial_t u + \partial_x f(u) =0$ using the scalar
flux $f(v)=2-v$ if $v\le 2$ and $f(v)=2v-4$ otherwise. The initial data is
$u_0(x)=1$ if $x\le 0$ and $u_0(x)=3$ otherwise. This flux is convex and
Lipschitz, but it is not strictly convex: the velocity is piecewise
constant and discontinuous. This class of problems is thoroughly
investigated in \cite{Petrova_Popov_1999}. The solution is
\begin{equation}
  u(x,t) =
  \begin{cases}
    1 & \text{if $x \le -t$}\\
    2 & \text{if $-t < x \le 2t$}\\
    3 & \text{if $2t < x$}.
  \end{cases}
\end{equation}
The solution is composed of two contact waves (\ie the characteristics do
not cross) separated by an expansion wave. One contact wave moves to the
left at speed $-1$, the other moves to the right at speed 2.

\begin{figure}[t]
\centering
\subfloat[]{\includegraphics[width=0.35\textwidth]{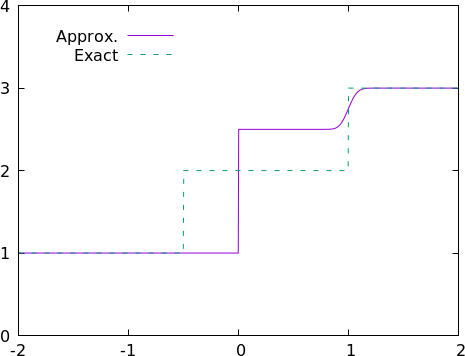}}
\hfil
\subfloat[]{\includegraphics[width=0.35\textwidth]{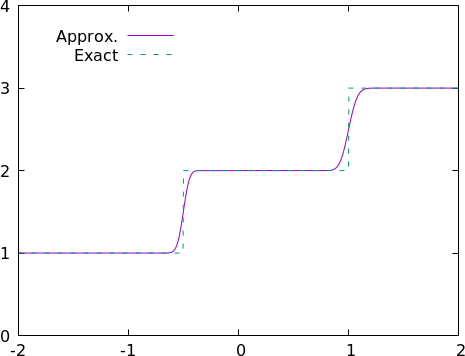}}
\caption{Approximation of a scalar conservation equation with piecewise
  linear flux: (a) viscosity solely based on $\lambda_{12}$ given in
  \eqref{def_lambda_12}; (b) viscosity based on an entropy inequality,
 Eq.~\eqref{Lambda_Scalar_Entrop_Ineq} with the choice $k_i=\frac12
  (\sfU_i^{\min,n}+\sfU_i^{\max,n}).$}
  \label{Fig:1D_roe_average}
\end{figure}

This example is meant to demonstrate that only using the wave speed
$\lambda_{12}$ defined in \eqref{def_lambda_12} to construct the graph
viscosity (\ie only using the Roe average) is not robust even in a case as
simple as the one above. Using the wave speed $\lambda_{12}$ guarantees
that the maximum principle locally holds, but the approximation may
converge to a nonentropic weak solution. We illustrate this phenomenon by
applying the algorithm described in the paper over the domain
$\Dom=(-2,2)$ using uniform meshes. The solution is computed to a final
time $t=0.5$ using \cfl=0.75. We show in the left panel of
Figure~\ref{Fig:1D_roe_average} the solution obtained with the viscosity
computed by using only $\lambda_{12}$. The graph of the exact solution is
shown with a dashed line. We observe that the approximate solution does not
converge to the exact solution. The leftmost discontinuity in the
approximate solution is stationary instead of moving to the left at speed
$-1$. The right panel shows the approximate solution using
definition~\eqref{Lambda_Scalar_Entrop_Ineq} for the wave speed with $k_i =
\frac12 (\sfU_i^{\min,n}+\sfU_i^{\min,n})$ for every $i\in\calV$. We have
verified that the method using this
definition for the wave speed converges with the expected rate (tables not shown
here for brevity).

\subsection{1D non-convex flux}
We now consider a Riemann problem in one space dimension  using the scalar
flux $f(v)=\sin(v)$. The initial data is $u_0(x)=(2+a)\pi$ if $x<0$ and
$u_0(x) = b\pi$ otherwise. Here, $a\in [\frac12,1]$ and $b\in [0,\frac12]$
are two chosen parameters. Note that the flux is neither convex nor concave
over the interval $[b\pi ,(2+a)\pi]$. Since $(2+a)\pi> b\pi$, the solution
is obtained by replacing the flux by its upper concave envelope which is
$\uconcave{f}(v)= \sin(v)$ for $v\in [b\pi, \frac12 \pi]$,
$\uconcave{f}(v)=1$ for $v\in [\frac12 \pi,\frac52 \pi]$, and
$\uconcave{f}(v)= \sin(v)$ for $v\in [\frac52 \pi,(2+a)\pi]$ (see, \eg
\cite[Lem.\,3.1]{Dafermos_1972} and \cite[\S2.2]{Holden_Risebro_2015}).
We note that the entire the interval $v\in [\frac12 \pi,\frac52 \pi]$ is
composed of sonic points. The exact solution is given by
\begin{equation}
  u(x,t) =
  \begin{cases}
    (2+a)\pi & \text{if $x\le t \cos((2+a)\pi)$}\\
    3\pi - \arccos(|x/t|) & \text{if $ t \cos((2+a)\pi) < x \le 0$}\\
    \arccos(x/t) &  \text{if $ 0 < x \le t \cos(b\pi)$}\\
    b \pi  &  \text{if $t \cos(b\pi)< x$}.
  \end{cases}%
  \label{ent_sol_1d_kpp}
\end{equation}
It is a composite wave composed of an expansion followed by a stationary
shock followed by a second expansion. The numerical tests reported
below are done with  $b=0$ and
$a=1$ over the domain $\Dom=(-1,1)$.

Here again, tests done with the graph viscosity solely based on the Roe
average $\lambda_{12}$ yields a method that is not robust (figures and
tables are not reported for brevity).  We observe that the approximate
solution is a stationary shock for every mesh refinement (\ie the
initial data does not evolve), which is clearly not the entropy
solution.  One can artificially try to avoid this problem by
initializing the approximate solution at $t_0>0$ using the exact
solution~\eqref{ent_sol_1d_kpp}.  If the mesh does not have a vertex
located at $\{0\}$, then convergence starts only when the mesh size is
less that $t_0$.  On the other hand, we observe convergence with no
pre-asymptotic range for every positive value of $t_0$ when the mesh
has a vertex located at $\{0\}$.  This behavior illustrates well the
lack of robustness of methods that are solely based on the wave speed
$\lambda_{12}$.

We now test the method based on the wave speed computed by using
\eqref{Lambda_Scalar_Entrop_Ineq}. The tests are done with
$\text{CFL}=0.5$. The relative errors in the $L^1$-norm and
$L^2$-norm are computed at $t=0.8$.  We test two strategies to select
the Kr\v{u}zkov entropy for each degree of freedom $i\in\calV$. The
first strategy consists of setting
$k_i= \theta \sfU_i^{\min,n}+(1-\theta)\sfU_i^{\max,n}$ where
$\theta=\frac12$. The second strategy consists of setting
$k_i= \theta_i \sfU_i^{\min,n}+(1-\theta_i)\sfU_i^{\max,n}$, where
$\theta_i\in(0,1)$ is a uniformly distributed random number changing
at every grid point $i\in\calV$.

\begin{table}[ht] \small \centering
  \begin{tabular}{rcccccccc}
    \toprule
    & \multicolumn{4}{c}{Random entropy} &  \multicolumn{4}{c}{Average entropy}  \\
    \cmidrule(lr){2-5} \cmidrule(lr){6-9}
    \# dofs & $\delta^1(t)$& rate & $\delta^2(t)$ & rate&$\delta^{1}(t)$
    &rate&$\delta^{2}(t)$ &rate \\[0.3em]
      51 & 1.96E-02 & --  & 2.21E-02  & --   & 2.41E-02 & --   & 2.30E-02 & --   \\
     101 & 1.39E-02 & 0.49 & 1.65E-02 & 0.42 & 1.81E-02 & 0.41 & 1.77E-02 & 0.38 \\
     201 & 9.17E-03 & 0.60 & 1.13E-02 & 0.55 & 1.33E-02 & 0.45 & 1.38E-02 & 0.35 \\
     401 & 5.87E-03 & 0.64 & 8.25E-03 & 0.45 & 9.81E-03 & 0.43 & 1.14E-02 & 0.28 \\
     801 & 3.66E-03 & 0.68 & 5.76E-03 & 0.52 & 7.54E-03 & 0.38 & 9.89E-03 & 0.20 \\
    1601 & 2.24E-03 & 0.71 & 4.15E-03 & 0.47 & 6.10E-03 & 0.31 & 9.06E-03 & 0.13 \\
    3201 & 1.38E-03 & 0.70 & 2.89E-03 & 0.52 & 5.20E-03 & 0.23 & 8.62E-03 & 0.07 \\
    6401 & 8.50E-04 & 0.70 & 2.06E-03 & 0.49 & 4.65E-03 & 0.16 & 8.39E-03 & 0.04 \\
    \bottomrule
  \end{tabular}
  \caption{1D two-sonic point problem. The second and fourth columns show
    relative errors in the $L^1$-norm and the $L^2$-norms using a random
    Kr\v{u}zkov entropy with $k= \theta u_L+(1-\theta) u_R$, where
    $\theta\in(0,1)$ is a uniformly distributed random value.
    The sixth and eight columns report relative errors in the $L^1$-norm
    and the $L^2$-norms obtained for the average Kr\v{u}zkov entropy with $k=
    \frac12(u_L+ u_R)$.}
  \label{Tab:1D_KPP}
\end{table}

When using the first strategy with fixed $\theta=\frac12$ we observe
exactly the same problems as reported above when only using
$\lambda_{12}$.  Irrespective of the location of the grid points, the
approximate solution is a stationary shock when one initializes the
approximate solution with the exact solution at $t_0=0$. Initializing
with the exact solution~\eqref{ent_sol_1d_kpp} at
$t_0=10^{-8}$ still produces a stationary shock when the point
$\{x=0\}$ is not a vertex of the mesh, but a non trivial solution is
obtained when the point $\{x=0\}$ is a vertex of the mesh. We show in
the right part of Table \ref{Tab:1D_KPP} convergence results using
$t_0=10^{-8}$ and uniform meshes with odd numbers of grid points.  We
observe some kind of convergence on coarse meshes, but eventually the error
stalls and stagnates as the mesh is further refined. We have observed this
behavior for every constant value of $\theta$. This is highly counter
intuitive because the viscosity based on~\eqref{Lambda_Scalar_Entrop_Ineq}
is strictly larger than $\lambda_{12}$, and we have observed in the above
paragraph that the approximate solution using $\lambda_{12}$ converges to
the entropy solution when the point $\{x=0\}$ is a mesh vertex. Here again,
we observe a clear lack of robustness even when the wave speed is augmented
so as to guarantee one ``entropy fix'' per grid point.

We now discuss what happens when the Kr\v{u}zkov entropy is randomly
chosen.  All the problem mentioned above disappear when
$\theta_i\in(0,1)$ is randomly chosen at every grid point. The method
convergences whether there is a grid point at $\{0\}$ or not and whatever
the initial time. In particular there is no problem setting $t_0=0$.  We
show convergence tests in the left panel of Table~\ref{Tab:1D_KPP} with
$t_0=0$. To be able to compare with the results displayed in the right part
of the table, we have use the same meshes. The method is now clearly
convergent and converges with the expected rates.

The conclusion of this section is that the method based on the greedy wave
speed computed by using \eqref{Lambda_Scalar_Entrop_Ineq} with random
Kr\v{u}zkov entropies is robust.

\begin{figure}[ht]
  \centering
  \subfloat[Solution at $t=0.8$]{%
    \begin{minipage}[c]{0.44\textwidth}
      \includegraphics[width=\textwidth]{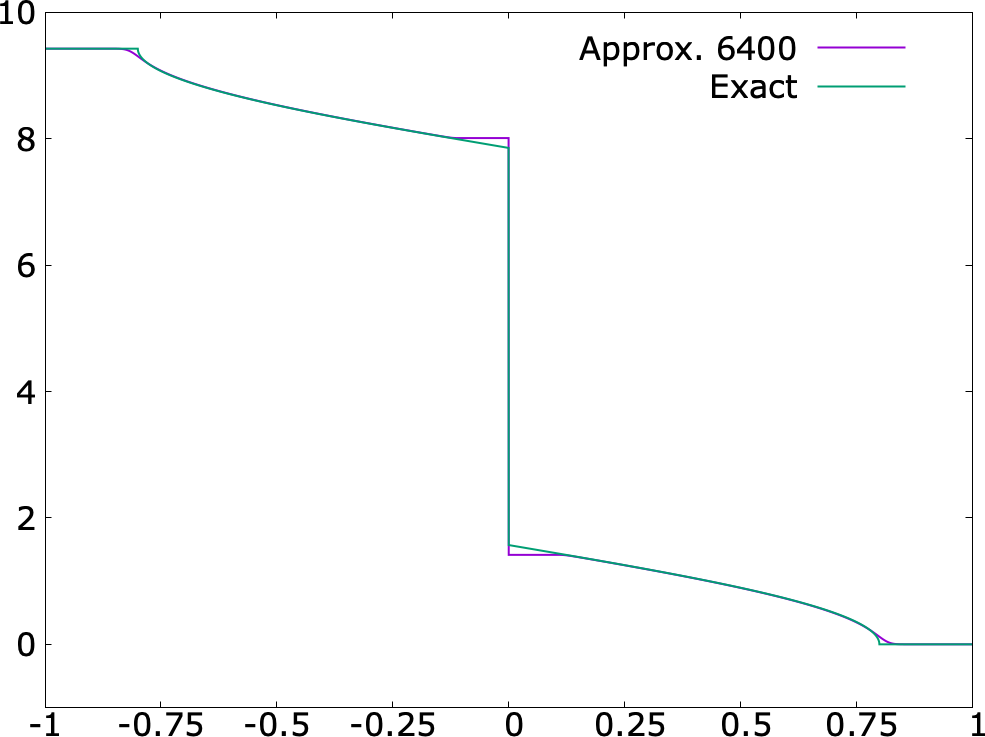}
    \end{minipage}}
  \hspace{0.25em}
  \subfloat[Convergence table]{%
    \small
    \begin{tabular}{rcccc}
      \toprule
      & \multicolumn{4}{c}{Square entropy $\eta(v)=\frac12v^2$} \\
      \cmidrule(lr){2-5}
      \# dofs & $\delta^1(t)$& rate & $\delta^2(t)$ & rate \\[0.25em]
          50 & 2.22E-02 & --  & 2.18E-02 & --   \\
         100 & 1.63E-02 & 0.45 & 1.64E-02 & 0.41 \\
         200 & 1.15E-02 & 0.50 & 1.23E-02 & 0.42 \\
         400 & 8.08E-03 & 0.51 & 9.44E-03 & 0.38 \\
         800 & 5.82E-03 & 0.47 & 7.61E-03 & 0.31 \\
        1600 & 4.38E-03 & 0.41 & 6.50E-03 & 0.23 \\
        3200 & 3.48E-03 & 0.33 & 5.87E-03 & 0.15 \\
        6400 & 2.92E-03 & 0.25 & 5.52E-03 & 0.09 \\
     \bottomrule
   \end{tabular}}
   \caption{1D two-sonic point problem computed with the square
     entropy $\eta(v)=\frac12v^2$. The ``entropy stable'' method does not converge to the
     entropy solution.}
  \label{fig:square_entropy}
\end{figure}

\begin{remark}[Robustness and ``entropy stability'']
  The numerical tests performed in this section demonstrates that
  robustness comes from randomness of the Kr\v{u}zkov entropy. Note in
  passing that this series of tests casts doubt on the robustness of
  methods that are called \emph{entropy stable} in the literature. Since
  these methods enforce only \emph{one fixed} global entropy inequality (at
  the semi-discrete level), one may wonder whether they produce
  approximations that converge to the right solution for the above
  one-dimensional problem. In order to provide some numerical evidence in
  this matter, we adjust our method as introduced in
  \S\ref{Sec:entropy_ineq_lambda_scalar} for the entropy
  $\eta(v)=\frac12v^2$ which is usually invoked in the literature dedicated
  to entropy stable methods. Redoing the computations in the proof of
  Lemma~\ref{Lem:Scalar_entropy_speed} with the square entropy gives
  $\lambda= (2 a b+d+\sqrt{\Delta})/(2(c-a^2))$ with $a\eqq
  \frac12(u_L+u_R)$, $b \eqq \frac12(\bef(u_L)-\bef(u_R))\SCAL\bn$, $c \eqq
  \eta(u_L) + \eta(u_R)$, $d = (\bq(u_R) - \bq(u_L))\SCAL \bn$, $\Delta
  \eqq (2 a b+d)^2 -4 b^2(a^2-c)$. The method thus produced is locally and
  globally entropy stable with respect to $\eta(v)=\frac12v^2$, \ie
  \eqref{eq2:Thm:UL_is_invariant_greedy} holds. Convergence tests with this
  method are reported in Figure~\ref{fig:square_entropy}. These tests show
  that the approximation does not converge to the entropy
  solution~\eqref{ent_sol_1d_kpp}. The convergence behavior is strange as
  the approximation seems to converge over a large pre-asymptotic range,
  but eventually, when the mesh is very fine, the approximation converges
  to a weak solution that is not the entropy solution. In conclusion, the
  method is definitely entropy stable for the square entropy but it is not
  convergent for non-convex fluxes; hence, it is not robust.
\end{remark}

\subsection{The 2D KPP problem}
\begin{figure}[h]
  \centering
  \subfloat[$\lambda_{12}$ with \eqref{def_lambda_12}]%
    {\includegraphics[width=0.31\textwidth]{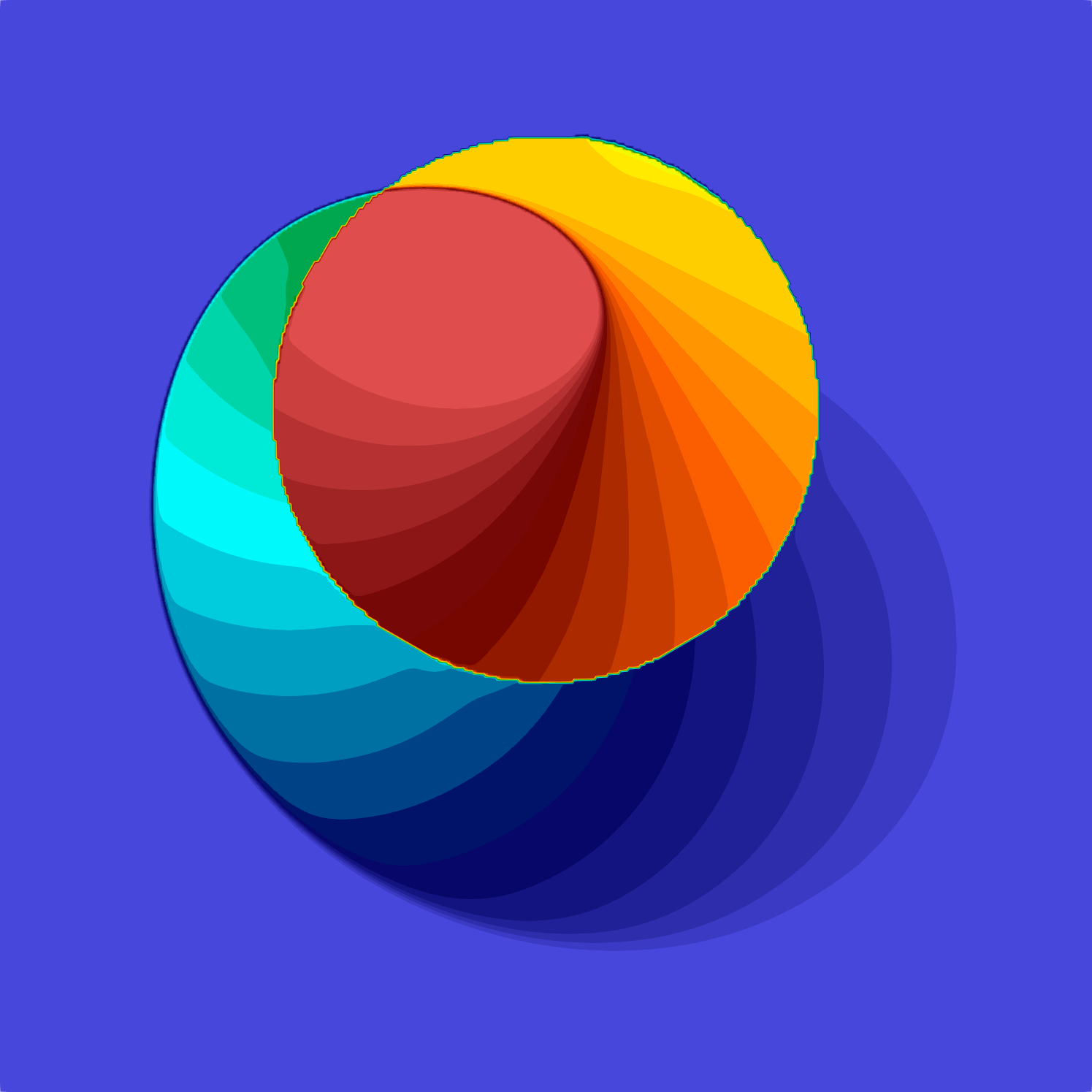}}
  \hspace{0.3em}
  \subfloat[$\lambda^{\text{grdy}}$ with \eqref{Lambda_Scalar_Entrop_Ineq}, $\theta_i=\tfrac12$]%
    {\includegraphics[width=0.31\textwidth]{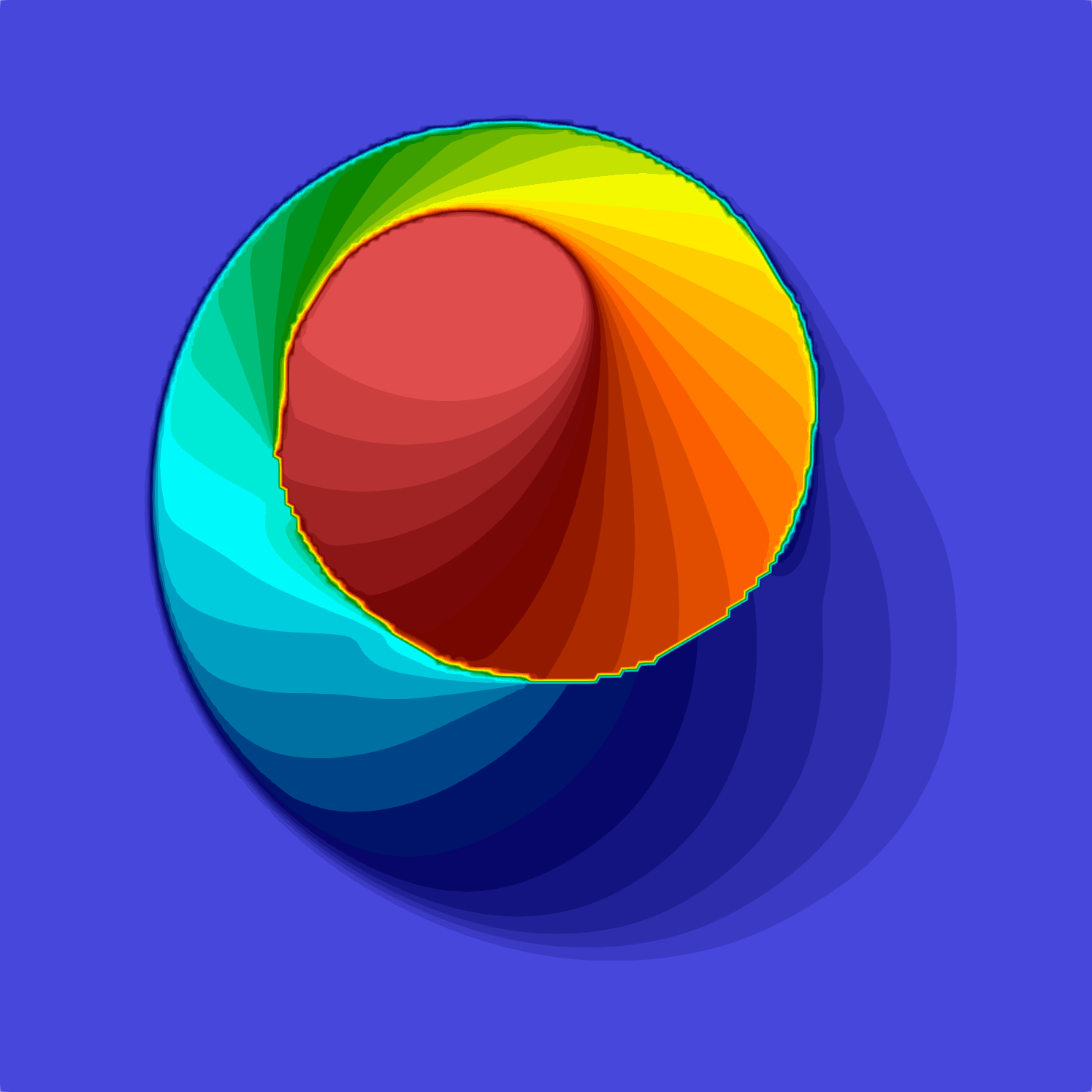}}
  \hspace{0.3em}
  \subfloat[$\lambda^{\text{grdy}}$ with \eqref{Lambda_Scalar_Entrop_Ineq}, $\theta_i$ random]%
    {\includegraphics[width=0.31\textwidth]{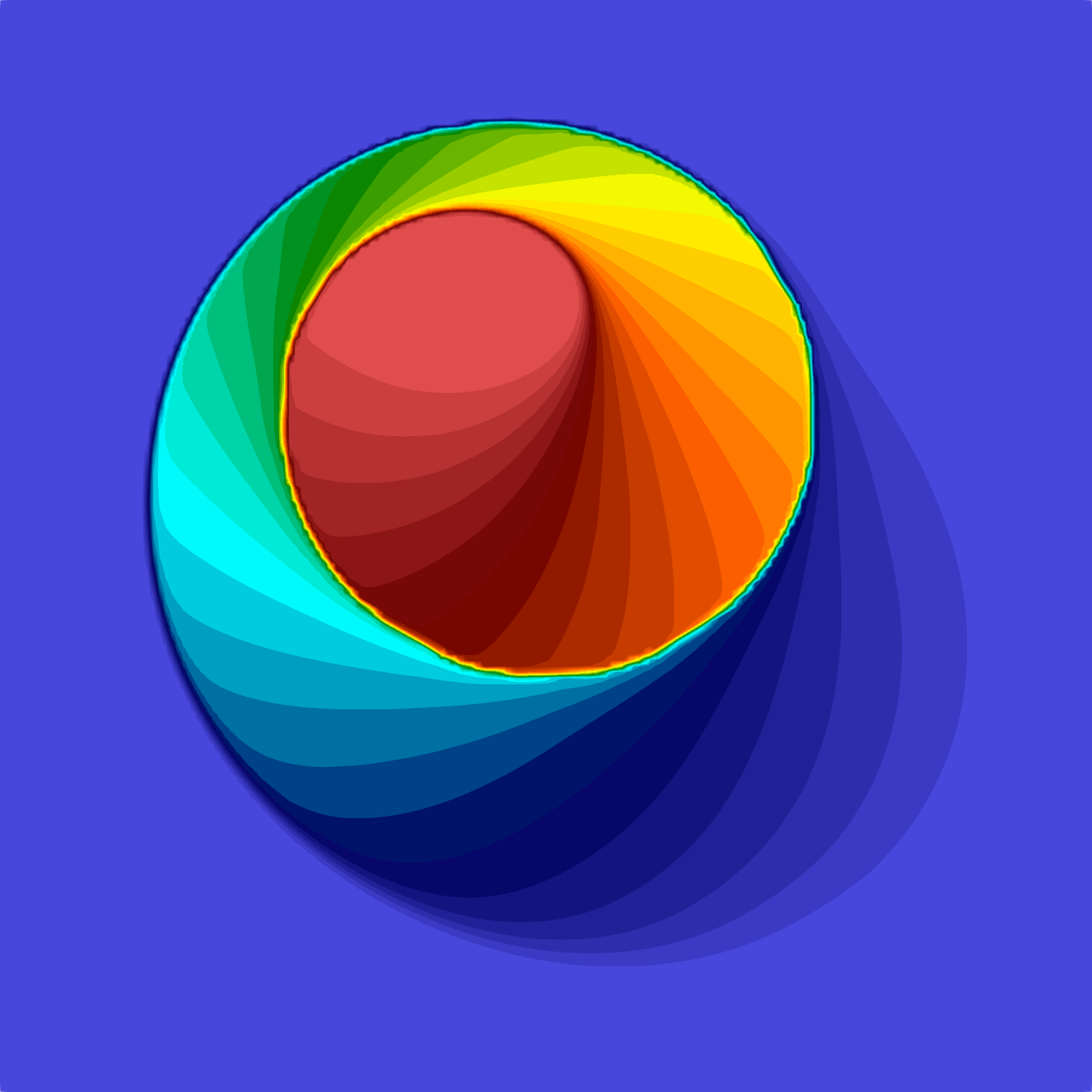}}
  \caption{2D KPP problem with $\polP_1$ elements on nonuniform Delaunay
    mesh (118850 grid points) at $t=1$, $\text{CFL}=0.5$, computed with
    three different strategies: (a) $\lambda_{\max}= \lambda_{12}$ using
    \eqref{def_lambda_12}; (b) wave speed $\lambda^{\text{grdy}}$ computed
    with~\eqref{Lambda_Scalar_Entrop_Ineq} using $k_i= \theta
    \sfU_i^{\min,n}+(1-\theta)\sfU_i^{\max,n}$ with $\theta=\frac12$; (c)
    wave speed $\lambda^{\text{grdy}}$ computed
    with~\eqref{Lambda_Scalar_Entrop_Ineq} using $k_i= \theta_i
    \sfU_i^{\min,n}+(1-\theta_i)\sfU_i^{\max,n}$ where $\theta_i\in(0,1)$
    is a uniformly random number changing for every $i\in\calV$. Only the
    solution in the right panel is the correct entropy solution.}
  \label{Fig:2D_KPP}
\end{figure}
We finish our numerical examples by solving a two-dimensional scalar
conservation equation with the non-convex flux $\bef(u):= (\sin u, \cos
u)\tr$
\begin{equation}
  \partial_t u + \DIV\bef(u) = 0, \quad
  u(\bx,0) = u_0(\bx)=
    \begin{cases}
      \tfrac{14\pi}{4} &\mbox{if } \sqrt{x^2+y^2} \leq 1 \\
      \tfrac{\pi}{4} &\mbox{otherwise},
    \end{cases}
  \label{test_KPP}
\end{equation}
in the computational domain $\Dom=[-2,2]\CROSS[-2.5,1.5]$. The problem
was originally proposed in \cite{KPP_2007}. The solution has a
two-dimensional composite wave structure which high-order numerical
schemes have difficulties to capture correctly. We approximate the solution
with continuous $\polP_1$ finite elements on nonuniform Delaunay
triangulations up to a final time of $t=1$. We show in
Figure~\ref{Fig:2D_KPP} three results computed on a mesh with 118850
grid points with {\cfl}=0.5. The solution shown in the leftmost panel
is obtained by only using the wave speed $\lambda_{12}$ for computing the
greedy viscosity. The solution in the midle panel is obtained with the the
wave speed \eqref{Lambda_Scalar_Entrop_Ineq} and the Kr\v{u}zkov entropy
using $k_i= \theta \sfU_i^{\min,n}+(1-\theta)\sfU_i^{\max,n}$ with
$\theta=\frac12$. The solution in the rightmost panel is obtained with the
the wave speed \eqref{Lambda_Scalar_Entrop_Ineq} and the Kr\v{u}zkov
entropy using $k_i= \theta_i \sfU_i^{\min,n}+(1-\theta_i)\sfU_i^{\max,n}$
where $\theta_i$ is a random number changing for every $i\in\calV$. One may
be mislead thinking that the solution in the middle panel is correct, but
the only approximation that converges correctly is the one using the random
entropy.

So, here again, our conclusion for scalar conservation equations is that
robustness can be achieved for methods based on the greedy wave speed
\eqref{Lambda_Scalar_Entrop_Ineq} provided the Kr\v{u}zkov entropies are
chosen randomly. Any other choice is not robust.


\section{p-System} \label{Sec:numerical_illustration_p_system}
%

We test the method on the p-system using the equation of state $p(v) =
\frac{1}{\gamma} v^\gamma$ with $\gamma=3$.
We consider a Riemann problem with left state
$\bu_L=(v_L,\sqrt{(1-v_L)(p(v_L)-p(1))}$ and right state
$\bu_R=(v_R,-\sqrt{(1-v_R)(p(v_R)-p(1))}$. The solution is composed of
two shock waves when $v_L,v_R> 1$, and in this case $v^*=1$, $u^*=0$.
For this test we set $v_L = 1.5$ and $v_R=1000$. The left shock is
weak and fast moving; the shock speed is close to $-0.6849$. The
right shock is strong and slow; the shock speed is close to $1.827\CROSS 10^{-2}$.

The simulations are done in the computational domain $\Dom\eqq
(0,1)$. The initial data is $\bu_0(x)=\bu_L$ if $x<0.8$ and
$\bu_0(x)=\bu_R$ otherwise. The relative error in the $L^1$-norm is
computed at $t=0.7$. The relative error is the sum of the relative
error on $v$ plus the relative error on $u$. Convergences test are
done on a sequence of uniform meshes starting from $51$ grid points to
$1601$ grid points. The results are shown in
Table~\ref{Tab:P_system}. The results in the first column are obtained
by using the upper wave speed estimate $\widehat\lambda_{\max}$ given
in \eqref{widehat_lambda_p_system}.
Those shown in the second column are obtained by using
$\lambda_{\max}$ as defined in \eqref{lambda_max_p_system} where $v^*$ is computed with a
Newton method with $10^{-10}$ tolerance. Those shown in the right
column are obtain with the greedy viscosity $\lambda^{\text{grdy}}$
defined in \S\ref{IDP_p_system}-\ref{Sec:p_system_entropy_lambda}.

\begin{table} \small\centering
  \begin{tabular}{rcccccc}
    \toprule
    & \multicolumn{2}{c}{$\widehat\lambda_{\max}$}
    & \multicolumn{2}{c}{$\lambda_{\max}$}
    & \multicolumn{2}{c}{$\lambda^{\text{grdy}}$} \\
    \cmidrule(lr){2-3} \cmidrule(lr){4-5} \cmidrule(lr){6-7}
    \# dofs & $\delta^1(t)$ & rate & $\delta^1(t)$ & rate & $\delta^1(t)$ & rate  \\[0.25em]
      51 & 3.33E-01  &   -- & 1.93E-01  &   -- & 1.31E-01  &   --  \\
     101 & 2.41E-01  & 0.47 & 1.57E-01  & 0.30 & 1.18E-01  & 0.15  \\
     201 & 1.41E-01  & 0.78 & 6.58E-02  & 1.25 & 4.93E-02  & 1.25  \\
     401 & 7.59E-02  & 0.89 & 4.42E-02  & 0.58 & 3.65E-02  & 0.43  \\
     801 & 3.64E-02  & 1.06 & 2.09E-02  & 1.08 & 1.77E-02  & 1.05  \\
    1601 & 1.70E-02  & 1.10 & 9.07E-03  & 1.20 & 7.76E-03  & 1.19  \\
    \bottomrule
  \end{tabular}
  \caption{Convergence tests for the $p$ system for various choices of
  wave speed estimate.}\label{Tab:P_system}
\end{table}

We show in Figure~\ref{Fig:p_system} the graph of the  $u$ component at
the final time $t=0.7$. In the left panel the approximation is done
with 101 uniform grid points. We show a closer view of the plateau
separating the two shocks in the right panel. The number of grid
points used in each case is: 101 in the top right panel; 401 in middle right panel;
and 1600 in the bottom right panel.
This series of simulations demonstrate well the gain in accuracy that
can potentially be gained by using the greedy viscosity technique
described in this paper.

 \begin{figure}[h]
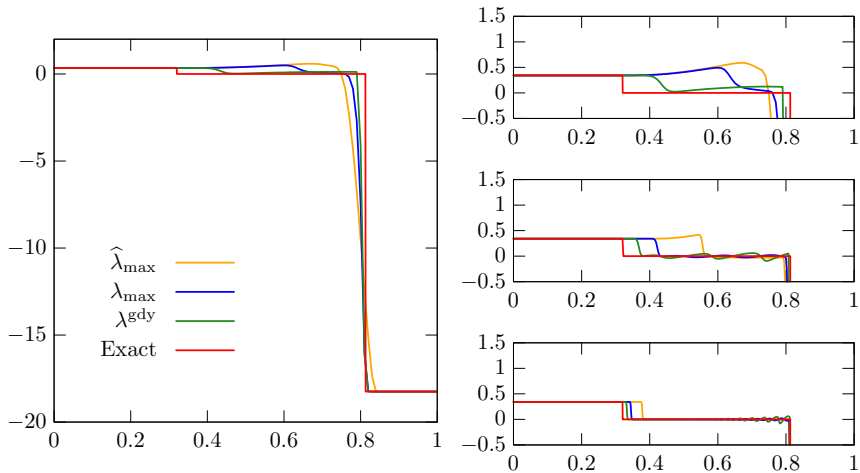

    \begin{minipage}[c]{0.5\textwidth}
\adjustbox{max width=\textwidth}{\input{p_system.tkz}}
    \end{minipage}\hspace{-025pt}
    \begin{minipage}[c]{0.5\textwidth}
\adjustbox{max width=\textwidth}{\input{p_system_100.tkz}}
\adjustbox{max width=\textwidth}{\input{p_system_400.tkz}}
\adjustbox{max width=\textwidth}{\input{p_system_1600.tkz}}
    \end{minipage}
    \caption{Approximation of the $u$ component in the $p$ system,
      $t=0.5$. Left: comparisons between the methods using
        $\wlambda_{\max}$, $\lambda_{\max}$, and
        $\lambda^{\textup{gdy}}$ with $101$ grid points. Right:
        Three refinements: $101$ grid points (top), $401$ grid points  (middle), $1600$ grid points 
        (bottom).} \label{Fig:p_system}
  \end{figure}

  \section{Conclusions}\label{Sec:conclusions}
  We have presented a general strategy to compute the artificial
  viscosity in first-order approximation methods for hyperbolic
  systems.  The technique is based on the estimation of a minimum wave
  speed guaranteeing that the approximation satisfies predefined
  invariant-domain properties and predefined entropy
  inequalities. This approach eliminates non-essential fast waves from
  the construction of the artificial viscosity, while preserving
  pre-assigned invariant-domain properties and entropy inequalities.
  One should however keep in mind that being invariant-domain
  preserving is in general not enough to have a method that is
  robust. Likewise ensuring only one entropy inequality is not a
  guarantee of robustness.

  We finish by briefly demonstrating the performance of the
  proposed methodology when applied to the compressible Euler
  equations.  For each pair $(i,j)$, $i,\in\calV$, $j\in\calI(i)$,
  the greedy viscosity is computed by first computing a wave speed
  that guarantees that the density satisfies local lower and upper
  bounds extracted from the local Riemann problem. This wave speed
  is then augmented by making sure that the specific entropy
  satisfies a local bound.  Finally, the wave speed is possibly
  again augmented to guarantee a local entropy inequality. The
  details are reported in a forthcoming second part of this work. As a
  preview, we consider the Woodward-Colella blast wave problem
  \citep{Woodward_Colella_1984}. We show in
  Figure~\ref{fig:blast_density_GMS_vs_Greedy_ALE} the density profile at
  $t=0.038$. We compare for three different mesh sizes the
  results obtained with the wave speed $\lambda^{\max}$ (labelled
  with the acronym ``GMS'' for guaranteed maximum speed) with those
  obtained with the greedy wave speed $\lambda^{\text{greedy}}$
  (labelled with the acronym ``Greedy''). The superiority of the
  greedy viscosity over the low-order standard method is evident,
  particularly in the region of the leftmost contact wave.
  \begin{figure}[ht]
  \includegraphics[width=0.32\textwidth,trim=275 14 265 14,clip=]{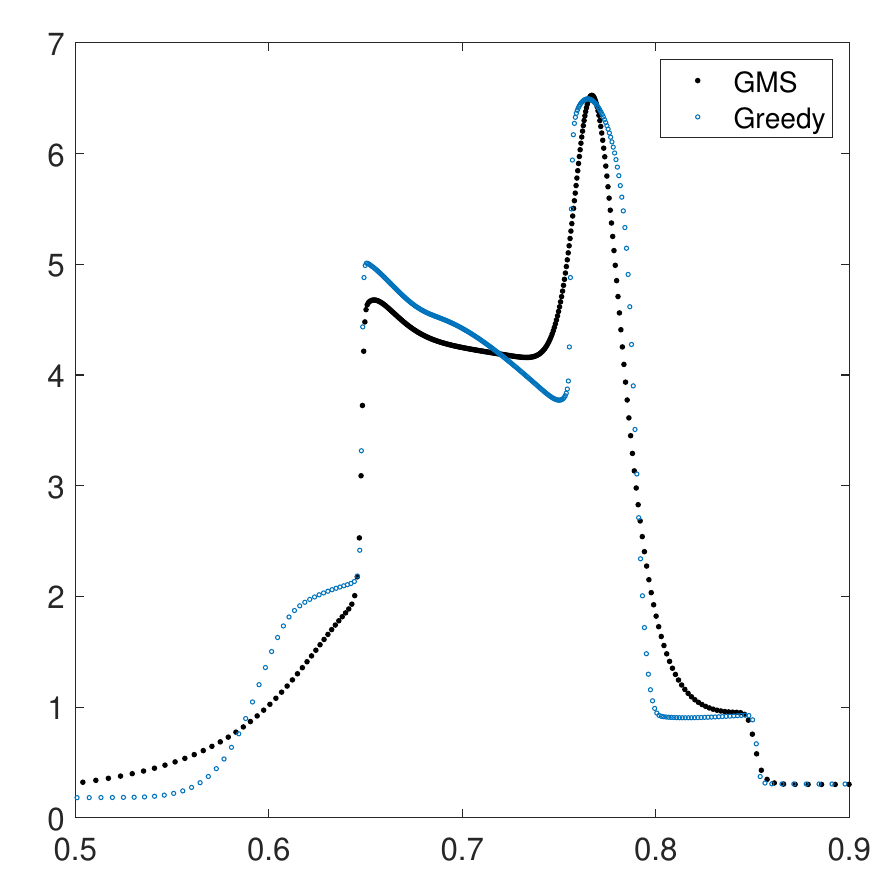}\hfil
  \includegraphics[width=0.32\textwidth,trim=275 14 265 14,clip=]{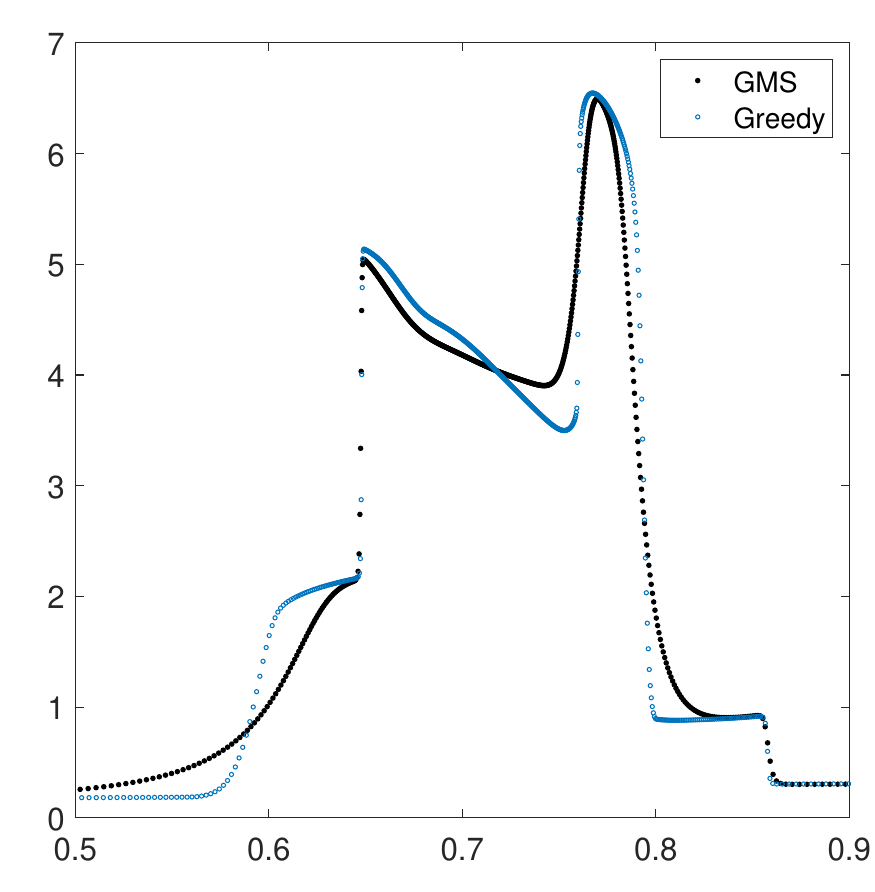}\hfil
  \includegraphics[width=0.32\textwidth,trim=275 14 265 14,clip=]{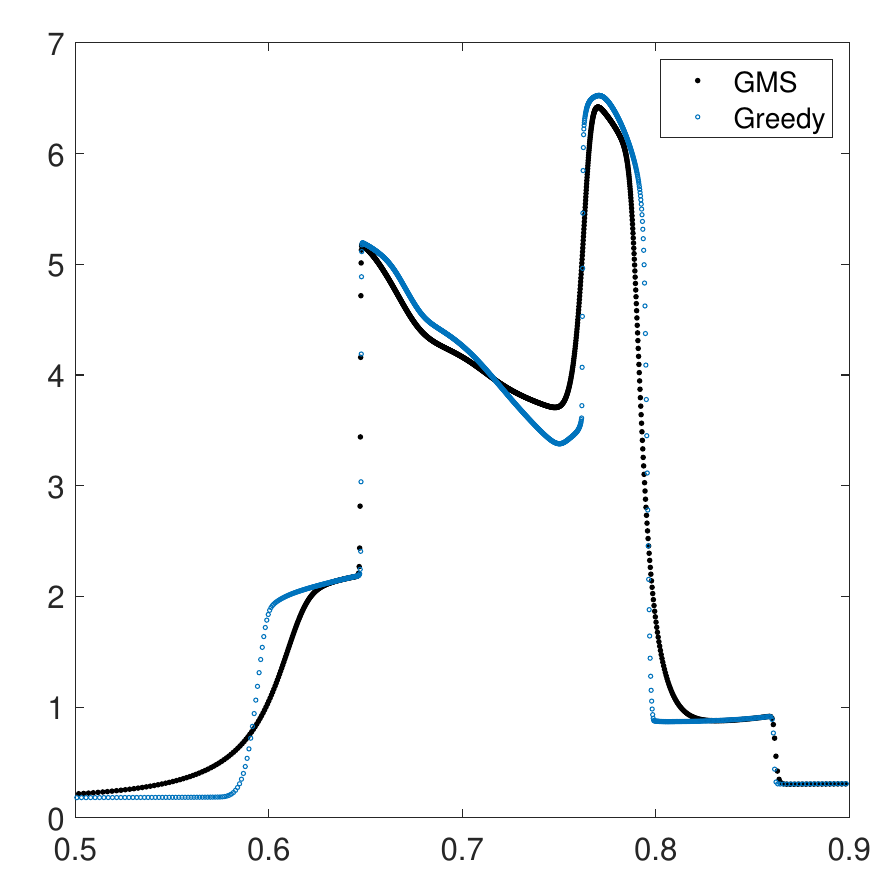}
  \caption{Woodward-Colella blast wave. Density at
  $t=0.038$. GMS vs. Greedy viscosity, from left to right: $\Nglob=400, 800, 1600$.}
  \label{fig:blast_density_GMS_vs_Greedy_ALE}
  \end{figure}

  \section*{Data availability statement}
  Two codes have been written for this project: one in Fortran2018 and
  one in C++. The Fortran code is available upon request. The C++ code is
  part of the Ryujin library
  \url{https://github.com/conservation-laws/ryujin}

  \section*{Funding}
  This material is based upon work supported in part by the National
  Science Foundation grants DMS-1619892 and DMS2110868 (JLG, BP), DMS-1912847 (MM),
  DMS-2045636 (MM), by the Air Force Office of
  Scientific Research, under grant/contract number FA9550-23-1-0007
  (JLG, MM, BP), the Army Research Office, under grant number W911NF-19-1-0431
  (JLG, BP), the U.S. Department of Energy by Lawrence Livermore National
  Laboratory under Contracts B640889, B641173 (JLG, BP), and by the
  European Union-NextGenerationEU, through the National Recovery and
  Resilience Plan of the Republic of Bulgaria, project No BG-RRP-2.004-0008
  (BP), and by the Spanish MCIN/AEI(10.13039/501100011033) under the grant
  PID2020-114173RB-I00 (LS).
  The authors acknowledge the Texas Advanced Computing Center (TACC) at
  The University of Texas at Austin for providing HPC resources that have
  contributed to the research results reported within this paper.
  \url{https://www.tacc.utexas.edu}.

\bibliographystyle{abbrvnat}

\end{document}